\def\NZQ{\mathbb}               
\def\NN{{\NZQ N}}
\def\ZZ{{\NZQ Z}}
\def\frk{\mathfrak}               
\def\mm{{\frk m}}
\def\Phi{{\frk N}}
\def\opn#1#2{\def#1{\operatorname{#2}}} 
\opn\chara{char} \opn\length{\ell} \opn\pd{pd} \opn\rk{rk}
\opn\projdim{proj\,dim} \opn\injdim{inj\,dim} \opn\rank{rank}
\opn\depth{depth} \opn\grade{grade} \opn\height{height}
\opn\embdim{emb\,dim} \opn\codim{codim}
\opn\Tr{Tr} \opn\bigrank{big\,rank}
\opn\superheight{superheight}\opn\lcm{lcm}
\opn\trdeg{tr\,deg}
\opn\reg{reg} \opn\lreg{lreg} \opn\ini{in} \opn\lpd{lpd}
\opn\size{size}\opn{\mult}{mult}\opn\sdepth{sdepth}
\opn\div{div} \opn\Div{Div} \opn\cl{cl} \opn\Cl{Cl}
\opn\Spec{Spec} \opn\Supp{Supp} \opn\supp{supp} \opn\Sing{Sing}
\opn\Ass{Ass} \opn\Min{Min}
\opn\Ann{Ann} \opn\Rad{Rad} \opn\Soc{Soc}
\opn\Syz{Syz} \opn\Im{Im} \opn\Ker{Ker} \opn\Coker{Coker}
\opn\Am{Am} \opn\Hom{Hom} \opn\Tor{Tor} \opn\Ext{Ext}
\opn\End{End} \opn\Aut{Aut} \opn\id{id} \opn\ini{in}
\opn\nat{nat}
\opn\pff{pf}
\opn\Pf{Pf} \opn\GL{GL} \opn\SL{SL} \opn\mod{mod} \opn\ord{ord}
\opn\Gin{Gin}
\opn\Hilb{Hilb}\opn\adeg{adeg}\opn\std{std}\opn\ip{infpt}
\opn\Pol{Pol}
\opn\sat{sat}
\opn\Var{Var}
\opn\Gen{Gen}
\def\ra{{\rightarrow}}
\opn\aff{aff} \opn\con{conv} \opn\relint{relint} \opn\st{st}
\opn\lk{lk} \opn\cn{cn} \opn\core{core} \opn\vol{vol}
\opn\link{link} \opn\star{star}
\opn\gr{gr}
\def\Fc{{\mathcal F}}
\def\pot#1#2{#1[\kern-0.28ex[#2]\kern-0.28ex]}
\opn\dirlim{\underrightarrow{\lim}}
\opn\inivlim{\underleftarrow{\lim}}
\let\union=\cup
\let\sect=\cap
\let\dirsum=\oplus
\let\iso=\cong
\let\Dirsum=\bigoplus
\let\sus=\subseteq
\let\to=\rightarrow
\def\Implies{\ifmmode\Longrightarrow \else
        \unskip${}\Longrightarrow{}$\ignorespaces\fi}
\def\implies{\ifmmode\Rightarrow \else
        \unskip${}\Rightarrow{}$\ignorespaces\fi}
\def\iff{\ifmmode\Longleftrightarrow \else
        \unskip${}\Longleftrightarrow{}$\ignorespaces\fi}
\let\sus=\subseteq
\newcommand{\Pscr}{\mathscr P}
\newcommand{\bfa}{{\bf a}}
\newcommand{\mto}[1]{\stackrel{#1}\longrightarrow}
\newtheorem{Theorem}{Theorem}[section]
\newtheorem{Lemma}[Theorem]{Lemma}
\newtheorem{Corollary}[Theorem]{Corollary}
\newtheorem{Proposition}[Theorem]{Proposition}
\theoremstyle{definition}
\newtheorem{Definition}[Theorem]{Definition}
\theoremstyle{remark}
\newtheorem{Remark}[Theorem]{Remark}
\newtheorem{Examples}[Theorem]{Examples}
\let\epsilon\varepsilon
\let\phi=\varphi
\let\kappa=\varkappa
\def\qed{\ifhmode\textqed\fi
      \ifmmode\ifinner\quad\qedsymbol\else\dispqed\fi\fi}
\def\textqed{\unskip\nobreak\penalty50
       \hskip2em\hbox{}\nobreak\hfil\qedsymbol
       \parfillskip=0pt \finalhyphendemerits=0}
\def\dispqed{\rlap{\qquad\qedsymbol}}
\opn\dis{dis}\opn\Mon{Mon}
\def\pnt{{\raise0.5mm\hbox{\large\bf.}}}
\def\lpnt{{\hbox{\large\bf.}}}
\opn\Lex{Lex}
\newcommand{\inD}[1][\relax]{\def\argone{#1}\def\temprelax{\relax}
  \ifx\argone\temprelax\right.\else\,\middle|#1\right.{}\fi}
\newif\ifbinary
\begin{document}

\title{Gr\"obner bases of syzygies and Stanley depth}

\author{Gunnar Fl{\o}ystad and J\"urgen Herzog}
\subjclass{}

\address{J\"urgen Herzog, Fachbereich Mathematik, Universit\"at Duisburg-Essen, Campus Essen, 45117
Essen, Germany} \email{juergen.herzog@uni-essen.de}

\address{Gunnar Fl{\o}ystad,   Department of Mathematics, Johs.\ Brunsgt.\ 12, 5008 Bergen, Norway} \email{gunnar@mi.uib.no}

\begin{abstract}
Let $F_{\lpnt}$ be a any free resolution of a ${\ZZ^n}$-graded submodule
of a free module over the polynomial ring $K[x_1, \ldots, x_n]$.
We show that for a suitable term order on $F_{\lpnt}$, the initial module
of the $p$'th syzygy module $Z_p$ is generated by terms $m_ie_i$
where the $m_i$ are monomials in $K[x_{p+1}, \ldots, x_n]$.
Also for a large class of free resolutions $F_{\lpnt}$,
encompassing Eliahou-Kervaire resolutions,
we show that a Gr\"obner basis for $Z_p$ is given by the
boundaries of generators of $F_p$.

We apply the above to give lower bounds for the Stanley depth of the syzygy
modules $Z_p$. We also show that if $I$ is any squarefree ideal in
$K[x_1, \ldots, x_n]$, the Stanley depth of $I$ is  at least
of order $\sqrt{2n}$.
\end{abstract}

\keywords{Syzygies, Stanley depth, Gr\"obner basis, multigraded modules,
squarefree ideals.}
\subjclass[2000]{Primary: 13D02, 13P10, 05E40.}


\maketitle

\section*{Introduction}
Let $K$ be a field and $S=K[x_1,\ldots,x_n]$ the polynomial ring in  $n$
variables over $K$. We study Gr\"obner bases of syzygies of finitely generated
$\ZZ^n$-graded modules  over $S$, and apply this to give lower bounds for the
Stanley depth of syzygy modules.

Fix any monomial order $<$ on $S$ and let $F$ be a free $\ZZ^n$-graded
$S$-module with a homogeneous basis
$\mathcal{F}=e_1,\ldots,e_m$. We define a monomial order on $F$ by
setting $ue_i>ve_j$ if $i<j$, or $i=j$ and $u>v$,
where $u$ and $v$ are monomials of $S$.
If $M$ is a $\ZZ^n$-graded submodule of $F$,
a basic observation is that the initial module
$\ini(M)$ does not depend on the monomial order $<$ on $S$ but only on
the basis $\mathcal{F}$.
Therefore we denote the initial module of $M$ with respect to
this monomial order by $\ini_{\mathcal F}(M)$.
We have  $\ini_{\mathcal F}(M)=\Dirsum_{j=1}^mI_je_j$, where each $I_j$ is a
monomial ideal.

We call the basis $\Fc$ of $F$ {\em lex-refined},
if $\deg(e_1)\geq \deg(e_2)\geq \ldots\geq  \deg(e_m)$ in the lexicographical
order.
Our first main result, Theorem \ref{main}, shows that the initial modules
of syzygy modules, when choosing a lex-refined basis, have a simple and
natural property~: let $M$ be a $\ZZ^n$-graded submodule of a free module
$F_0$ with free resolution  $\cdots \to F_2\to F_1\to M\to 0$.
For  $0 \leq p\leq n$ let $Z_p \subseteq F_p$ be the $p$'th syzygy module.
Then the initial module $\ini_{\mathcal F}(Z_p)$ is $\Dirsum_{j=1}^mI_je_j$,
where the minimal
set of monomial generators  of each $I_j$ belongs to $K[x_{p+1},\ldots,x_n]$.

This theorem may remind the reader to a well-known result of F.-O. Schreyer, see Section 5.5 of
\cite{Ei}, who
showed that for any finitely generated module $M$ one can find a free resolution
and suitable monomial orders on the free modules of the resolution such that the
initial modules of the syzygies enjoy the same nice property as described above.
The point here is that no assumption is made on the
$\ZZ^n$-graded  resolution on $M$. In particular, the theorem is valid for the
 graded minimal free resolution of $M$.

In general of course it is not so easy to compute the initial module of a syzygy
module in a free resolution $F_{\lpnt}$ of a module $M$. But for certain
classes of resolutions this may be done in a pleasant way.
We say that the resolution has {\em boundary Gr\"obner bases}
if for each $p$  there exists a
basis $\mathcal{F}_p$ of $F_p$ such that
$\ini_{{\mathcal F_p}}(Z_p(F_{\lpnt}))$
is generated by the initial terms of $\partial_{p+1}(e_i)$
where $\partial_{\lpnt}$ denotes the differential of $F_\lpnt$ and $e_i$ ranges
over $\Fc_{p+1}$.
If $F_{\lpnt}$ has such bases,  the initial modules of the syzygies can easily be read
off from the matrices describing $\partial_{\lpnt}$ with respect to these bases.
We show that the Taylor resolution as well as the Eliahou--Kervaire
resolution have boundary Gr\"obner bases.

\medskip
We then apply the first result on syzygies to give lower bounds for the
Stanley depth of syzygies. A {\em Stanley decomposition} of a $\ZZ^n$-graded
$S$-module $M$ is a direct sum decomposition $M=\Dirsum_{i=1}^mu_iK[Z_i]$
of $M$  as a $\ZZ^n$-graded $K$-vector space, where each $u_i$ is a homogeneous
element of $M$, $K[Z_i]$ is a polynomial ring is a set of variables
$Z_i\subset \{x_1,\ldots,x_n\}$, and each $u_iK[Z_i]$ is a free $K[Z_i]$-submodule
of $M$. The minimum of the numbers $|Z_i|$ is called the Stanley depth of this
decomposition. The {\em Stanley depth} of $M$, denoted $\sdepth M$,  is the
maximal Stanley depth of a Stanley decomposition of $M$. In his paper \cite{Sta}
Stanley conjectured that $\sdepth M\geq \depth M$. This conjecture is widely open.
In the papers  listed in the references in this paper and the references therein,
the reader can inform himself you about the present status of the conjecture.

Naively one could expect, that like for the ordinary depth, the Stanley depth of
the first syzygy module $Z_1(M)$ of a $\ZZ^n$-graded module $M$ 
is one more than that of $M$,
as along as $M$  is not free. This of course would immediately imply Stanley's
conjecture. However this is not the case. For example, if we let
 $\mm=(x_1,\ldots,x_n)$ be the graded maximal ideal of $S$. Then
$\sdepth S/\mm=0$, while $\sdepth \mm=\lceil n/2\rceil$, as shown in \cite{Biro}.
Nevertheless it might be true that one always has $\sdepth Z_1(M)\geq \sdepth M$.
But at the moment even the inequality $\sdepth I\geq \sdepth S/I$ for a monomial
ideal $I$ is unknown. However as one of the main results of this paper, we show 
in Theorem~\ref{MainSyz} that if $M$ is a $\ZZ^n$-graded free submodule of free
$S$-module, then for $p<n$  the $p$'th  syzygy module of $M$ with respect to any
(not necessarily minimal) $\ZZ^n$-graded free resolution of $M$  has Stanley depth
at least $p+1$.

One problem in proving such a result as stated in  Theorem~\ref{MainSyz} is the
fact that at present no method is known to compute the Stanley depth of a
$\ZZ^n$-graded module in a finite number of steps. So far this can be done  only
for modules of the form $I/J$ where $J\subset I\subset S$ are monomial ideals, see
 \cite{He}. It seems not even to be  known that for a monomial ideal $I\subset S$
 one has $\sdepth I\dirsum S=\sdepth I$, as one would expect. The only method
known to get a lower bound for the Stanley depth of a $\ZZ^n$-graded module $M$ is
 to find a suitable filtration of the module whose factors are of the form $I$ or
$S/I$ where $I$ is a monomial ideal. The Stanley depth of $M$ is then just the
minimum of the Stanley depth of the factors of the filtration.
This enables us to give lower bounds for the Stanley depth of a syzygy module by
using that if  the initial module $\ini_{\mathcal F}(Z_p)$
is $\Dirsum_{j=1}^mI_je_j$, then  the monomial
ideals $I_j$ are the factors of a suitable
filtration of $Z_p$. Therefore the Stanley depth of $M$ is
greater or equal to the minimum of the Stanley depths of the $I_j$.

We also apply the second result on Gr\"obner basis of syzygies to show
that when $I\subset S$ is a monomial complete intersection minimally generated
by $m$ elements,  the $p$'th syzygy module of $S/I$ has Stanley depth at least
$n-\lfloor \frac{m-p}{2}\rfloor$, see Proposition~\ref{regular}. This indicates
that our general lower bounds for the Stanley depth of syzygy modules are far from
being optimal.

\medskip Somewhat independently of the above, we show that if $I$ is a squarefree
ideal over the polynomial ring in $n$ variables, the
Stanley depth of $M$ is at least of order $2\sqrt{n}$, Theorem \ref{SqfreeStDe}.
This is quite in contrast to
ordinary monomial ideals. In fact it is known by Cimpoea\c s \cite{Ci1} that
sufficiently high powers of $\mm$ have Stanley depth 1. The proof of
Theorem~\ref{SqfreeStDe} is based on a construction
of interval partitions \cite{Ke} by M.Keller et. al. which is further refined
M.Ge et. al. in \cite{Lin}.
Applying this we give lower bounds, Theorem \ref{squarefree},
for the Stanley depth of syzygy modules
of a squarefree submodule of a free module. This bound is considerably better
than what we have for arbitrary $\ZZ^n$-graded submodules.

\medskip
The organisation of the paper is as follows. In Section 1 we consider resolutions
of $\ZZ^n$-graded $S$-modules and initial modules of their syzygy modules
determined by chosing ordered multihomogeneous bases for the terms in
the resolutions. We first prove that for lex-refined orders, the generators
of the initial module of the $p$'th syzygy module does not involve the first $p$
variables.
We then give classes of resolutions which have boundary Gr\"obner bases.
In Section 2 we give the lower bounds on Stanley depth of syzygies. These
are a consequence of the results in Section 1, and in the squarefree case, a consequence
of the result in the next Section 3. In this last section, we show that the Stanley
depth of any squarefree monomial ideal in $n$ variables is at least of order
$2\sqrt{n}$.

\section{Gr\"obner basis of syzygies of multigraded modules}
\label{1}

We consider term orders on $\ZZ^n$-graded free modules over a polynomial
ring in $n$ variables which are determined by fixing a multihomogeneous
basis $e_1, \ldots, e_m$ of the free module and comparing terms
$ue_i$ and $ve_j$ by first comparing their basis elements.
For such term orderings the initial term of any multihomogeneous element
will be determined solely by the ordering of the $e_i$'s, and so also the
initial module of any multihomogeneous submodule.

   A natural ordering of the $e_i$'s is by lexicographic ordering of their
multidegrees. We show then that the syzygies of a free resolution of a
$\ZZ^n$-graded module have the nice and natural property that for each
successive syzygy module we miss an extra variable in their generating set.

\medskip
Let $K$ be a field,  $S=K[x_1,\ldots,x_n]$ the polynomial ring over $K$ in $n$ indeterminates,
$M$  a  $\ZZ^n$- graded  $S$-module and
\begin{eqnarray}
\label{resolution}
F_{\lpnt}\: \cdots\to  F_p\to F_{p-1}\to \cdots \to F_1 \to  M\to 0
\end{eqnarray}
a $\ZZ^n$-graded (not necessarily minimal) free $S$-resolution of $M$ with all $F_i$ finitely generated. Let $Z_p\subset F_p$ be the $p$'th syzygy module of $M$ (with respect to this resolution).  We are interested in the initial module of the syzygy module $Z_p$ with respect to a monomial order on $F_p$. In this paper we restrict our attention to monomial orders of the following type: we denote by $\Mon(S)$ the set of monomials of $S$,  fix  a multihomogeneous basis ${\mathcal F}=e_1,\ldots,e_m$ of $F_p$ and  a monomial order $<$ on $S$,  and define a monomial order on $F_p$ by setting
\begin{eqnarray}
\label{order}
ue_i>ve_j,\quad \text{if $i<j$, or $i=j$ and $u>v$.}
\end{eqnarray}
Here $u,v\in\Mon(S)$. We denote by $\ini_<(Z_p)$ the monomial submodule of $F_p$ which is generated by all the initial monomials of elements of  $Z_p$. Notice that
\begin{eqnarray}
\label{dirsum}
\ini_<(Z_p)=\Dirsum_{j=1}^mI_je_j,
\end{eqnarray}
where each $I_j$ is a monomial ideal.

Since $Z_p$ is $\ZZ^n$-graded, $\ini_<(Z_p)$ is generated by  the initial monomials of  multihomogeneous elements of $Z_p$. Let $z=\sum_{i=1}^mf_ie_i$ be a multihomogeneous element in $Z_p$. Then each $f_i$ is a term $a_iu_i$ with $a_i\in K$ and $u_i$ a monomial. Thus $\ini_<(z)=u_je_j$, where $j$ is the smallest number such that $a_i\neq 0$. This consideration shows that for the above monomial order, the  initial module of a  syzygy module  of a monomial ideal {\it depends only on the given basis}  not on the chosen monomial  order on $S$. Thus we write $\ini_{\mathcal F}(Z_p)$ to denote the initial module of $Z_p$ with respect to this monomial order induced by $\mathcal F$.

In general,  for a given resolution $F_\lpnt$ there are several equally natural choices of multihomogeneous  bases for the $F_p$. Here we will choose  for each $F_p$ a basis  compatible with the lexicographical order of the multidegrees of the basis elements. We call such a basis of $F_p$ {\em lex-refined}. Thus a basis ${\mathcal F}=e_1,\ldots,e_m$ of $F_p$ of multihomogeneous elements is lex-refined, if $\deg(e_1)\geq \deg(e_2)\geq \ldots\geq  \deg(e_m)$ in the lexicographical order.

\medskip
\begin{Theorem}
\label{main} Suppose $M$ is a submodule of a free module $F_0$.
Let $0 \leq p\leq n$ be an integer, and let $\mathcal F$ be a lex-refined basis of $F_p$.
Then $\ini_{\mathcal F}(Z_p)=\Dirsum_{j=1}^mI_je_j$, where the minimal set of monomial generators  of each $I_j$ belongs to $K[x_{p+1},\ldots,x_n]$.
\end{Theorem}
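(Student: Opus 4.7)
My plan is to reduce the theorem to the following claim: for every multihomogeneous $z\in Z_p$ with $\ini_{\mathcal F}(z)=x^{\beta_j}e_j$ (where $\beta_j=\alpha-\alpha_j$ for $\alpha=\deg z$ and $\alpha_i=\deg e_i$) and every $k\le p$ with $x_k\mid x^{\beta_j}$, one has $x^{\beta_j-\epsilon_k}e_j\in\ini_{\mathcal F}(Z_p)$, where $\epsilon_k$ denotes the $k$-th standard basis vector of $\ZZ^n$. This is precisely the statement that $I_j$ has no minimal monomial generator involving $x_k$, which is the desired conclusion.

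Write $z=\sum_{i\ge j}c_i x^{\beta_i}e_i$ with $c_j\ne 0$ and $c_i=0$ for $i<j$. The core ingredient is a calculation made possible by lex-refinement. For any $i>j$ in the support of $z$, multihomogeneity gives $\beta_i+\alpha_i=\beta_j+\alpha_j$, while lex-refinement forces $\alpha_i\le_{\mathrm{lex}}\alpha_j$. Hence either $\alpha_i=\alpha_j$ (so $\beta_i=\beta_j$) or, letting $l=l(i)$ be the smallest coordinate where $\alpha_i$ and $\alpha_j$ differ, one has $(\alpha_i)_l<(\alpha_j)_l$ and $(\alpha_i)_s=(\alpha_j)_s$ for $s<l$. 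This translates to $(\beta_i)_s=(\beta_j)_s$ for $s<l$ and $(\beta_i)_l>(\beta_j)_l$.

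From this observation the case $k=1$ follows immediately and uniformly in $p$: in every sub-case one obtains $(\beta_i)_1\ge(\beta_j)_1\ge 1$ for each $i$ in the support of $z$, so every term of $z$ is divisible by $x_1$. Writing $z=x_1 z'$ with $z'\in F_p$ of multidegree $\alpha-\epsilon_1$, the torsion-freeness of $F_{p-1}$ forces $x_1\partial z'=\partial(x_1z')=0$, hence $\partial z'=0$; thus $z'\in Z_p$ and $\ini_{\mathcal F}(z')=x^{\beta_j-\epsilon_1}e_j$, as required.

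For general $k\le p$ I would proceed by induction on $k$. Using the inductive hypothesis iteratively to strip factors of $x_{k'}$ with $k'<k$ from the leading monomial, one may WLOG assume $(\beta_j)_s=0$ for all $s<k$. Under this normalization, the observation splits the support of $z$ at indices $i>j$ into \emph{good} terms (those with $\alpha_i=\alpha_j$ or first-difference index $l\ge k$, which are divisible by $x_k$) and \emph{bad} terms (those with $l<k$, which are divisible by $x_l$ but possibly not by $x_k$). If there are no bad terms then $z$ is entirely divisible by $x_k$ and the base-case argument applies verbatim. The main obstacle, and the delicate point of the whole proof, is to modify $z$ so as to cancel the bad terms while keeping the leading coefficient $c_j$ at $e_j$ nonzero. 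My plan for this step is to lift $z$ to $F_{p+1}$ via exactness ($z=\partial y$, with $y$ chosen multihomogeneous), invoke the inductive hypothesis at indices $l<k$ to produce compensating multihomogeneous syzygies of multidegree $\alpha$ whose leading indices exceed $j$, and use these to replace the bad terms iteratively; arranging the order of these modifications so that the leading coefficient at $e_j$ is preserved will require a careful well-founded argument.
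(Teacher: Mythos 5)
Your preparatory observations are sound: the lex-refined structure yields exactly the dichotomy you describe at indices $i>j$ in the support, the base case $k=1$ follows as you say (every term of $z$ is divisible by $x_1$, divide it out, and use torsion-freeness of $F_{p-1}$), and the WLOG reduction making $k$ the least index dividing $\ini_{\mathcal F}(z)$ matches the paper's choice of the index $r$. But the crucial step — modifying $z$ so as to eliminate the bad terms while preserving the leading term — is not established, and the plan you sketch does not close the gap. Your inductive hypothesis (running on the variable index $k$) only asserts a structural property of $\ini_{\mathcal F}(Z_p)$; it does not produce, for a given bad term $c_i x^{\beta_i}e_i$, a syzygy in $Z_p$ of multidegree $\alpha$ with that (or any controlled) initial term. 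The lift of $z$ to $y\in F_{p+1}$ is announced but never used in any identifiable way. And an iterative cancellation could introduce new terms at positions between $j$ and those you remove, some of which may again be bad, so the well-foundedness you appeal to is itself an unproven claim.

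The paper's Proposition~\ref{gunnar} resolves this by inducting on $p$, not on $k$, and by a single wholesale replacement rather than term-by-term cancellation. Writing $z=z'+z''$ (your good and bad parts), one discards $z''$ entirely and observes that $w:=\partial_p(z')=-\partial_p(z'')$ is simultaneously divisible by $x_k$ (since $x_k\mid z'$) and has every term divisible by some $x_l$ with $l<k$ (since this holds for every term of $z''$). Hence $w/x_k\in Z_{p-1}$, which one expands in a reduced Gr\"obner basis $\{g_i\}$ of $Z_{p-1}$ with a standard representation $w/x_k=\sum b_i v_i g_i$. The $p$-inductive hypothesis — that the leading monomials of the $g_i$ avoid $x_1,\ldots,x_{p-1}$ — together with a degree comparison coming from the lex order on the basis of $F_{p-1}$, shows that lifting each $g_i$ to $f_i\in F_p$ and setting $\tilde z=z'-x_k\sum b_i v_i f_i$ gives an element of $Z_p$ divisible by $x_k$ with the same initial term as $z$. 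The missing idea in your proposal is precisely this reach \emph{backward} to $Z_{p-1}$ and its Gr\"obner basis, with the induction running over the homological degree $p$ rather than the variable index $k$.
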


This theorem is an immediate consequence of the following

\begin{Proposition}
\label{gunnar}
Let $\varphi\: F\to G$ be a homomorphism of finitely generated $\ZZ^n$-graded free $S$-modules with $M=\Im \varphi$ and $N=\Ker \varphi$.
Let ${\mathcal G}=e_1',\ldots,e_d'$ be a lex-refined basis of $G$ and ${\mathcal F}=e_1,\ldots, e_c$ a lex-refined basis of $F$, and assume that
for some $p\leq n$, $\ini_{\mathcal G}(M)$ is generated by monomials not divisible by $x_1,\ldots, x_{p-1}$. Then $\ini_{\mathcal F}(N)$ is generated by monomials not divisible by $x_1,\ldots,x_p$.
\end{Proposition}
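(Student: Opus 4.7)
My plan is to induct on $k \in \{1, \ldots, p\}$, showing that no minimal monomial generator of any $I_j$ is divisible by $x_k$. Fix such a generator $u$, pick a multihomogeneous $z = u e_j + \sum_{i > j} a_i e_i \in N$ of multidegree $\alpha$, and let $M_{>j}$ denote the submodule of $G$ generated by $\varphi(e_{j+1}), \ldots, \varphi(e_c)$, so that $u \in I_j$ iff $u\varphi(e_j) \in M_{>j}$. For the base case $k = 1$, if $u = x_1 v$ then lex-refinement of $\mathcal F$ gives $\deg(e_i)_1 \leq \deg(e_j)_1 \leq \alpha_1 - 1$ for every $i > j$, so $x_1 \mid a_i$ for all such $i$. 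Then $z/x_1 \in N$ has initial term $ve_j$, contradicting minimality.

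For the inductive step $k \geq 2$, suppose $u = x_k v$; by induction $v$ is divisible by none of $x_1, \ldots, x_{k-1}$. For each $i > j$ with $a_i \neq 0$, let $r(i)$ be the first coordinate at which $\deg(e_i)$ and $\deg(e_j)$ differ. If $r(i) \geq k$, the base-case argument still gives $x_k \mid a_i$; if $r(i) < k$, then $\deg(a_i)_{r(i)} > \deg(v)_{r(i)} = 0$, so $x_{r(i)} \mid a_i$. Grouping the $a_i\varphi(e_i)$ by these cases, the relation $\varphi(z) = 0$ rewrites as $\sum_{r=1}^{k} x_r \tilde m_r = 0$ in $G$, with $\tilde m_k = v\varphi(e_j) + m_0$ and $\tilde m_r = m_r \in M_{>j}$ for $r < k$, where $m_0$ collects the $r(i) \geq k$ contributions and each $m_r$ collects those with $r(i) = r$. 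Since $(x_1, \ldots, x_k)$ is a regular sequence on the free module $G$, this syzygy is Koszul: there exist $h_{rs} \in G$ for $1 \leq r < s \leq k$ such that reading the $k$-th coordinate gives $v\varphi(e_j) + m_0 = -\sum_{r<k} x_r h_{rk}$.

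The hypothesis on $\ini_{\mathcal G}(M)$ says each $S/I_\ell'$ has $x_1, \ldots, x_{p-1}$ as a regular sequence. Via the multigraded filtration of $G/M$ whose subquotients are the $S/I_\ell'$ (and the standard Tor-based propagation of regular sequences along short exact sequences), $x_1, \ldots, x_{p-1}$, and hence its initial segment $x_1, \ldots, x_{k-1}$, is a regular sequence on $G/M$. Reducing our equation modulo $M$ produces a syzygy of $(x_1, \ldots, x_{k-1})$ in $(G/M)^{k-1}$; by Koszul exactness I can adjust the $h_{rk}$ by multihomogeneous Koszul boundaries to obtain $h_{rk}' \in M$, and an index-relabelling shows that $\sum_{r<k} x_r h_{rk}'$ still equals $\sum_{r<k} x_r h_{rk} = -(v\varphi(e_j) + m_0)$. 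Decomposing each $h_{rk}' = s_{rk}\varphi(e_j) + m_{rk}''$ with $m_{rk}'' \in M_{>j}$ via the cyclic isomorphism $M/M_{>j} \cong S/I_j$, multihomogeneity forces $\deg(s_{rk}) = \deg(v) + \epsilon_k - \epsilon_r$, whose $r$-th coordinate is $\deg(v)_r - 1 = -1 < 0$, so $s_{rk} = 0$ and $h_{rk}' \in M_{>j}$. Hence $v\varphi(e_j) \in M_{>j}$, giving $v \in I_j$ and contradicting minimality of $u$. The main delicate point is the Koszul-boundary modification: verifying both that the lift can be chosen multihomogeneously and that the cancellation in $\sum_r x_r h_{rk}$ really goes through, so the multidegree argument on the $s_{rk}$ applies.
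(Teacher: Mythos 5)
Your proof is correct, but it proceeds by a genuinely different route than the paper's. The paper's argument is a direct Gröbner-basis construction: given a multihomogeneous $s\in N$ whose initial term is divisible by $x_r$ ($r\leq p$), it decomposes $s=s'+s''$ according to whether a coefficient is divisible by one of $x_1,\ldots,x_{r-1}$, uses lex-refinement to show $x_r\mid s'$, expands $\varphi(s')/x_r$ against a reduced Gröbner basis of $M$, and lifts this expansion back into $F$ to produce a replacement syzygy $\tilde{s}$ with $\ini(\tilde{s})=\ini(s)$ and $x_r\mid\tilde{s}$; the degree estimates needed for $\ini(\tilde s)=\ini(s)$ come from the hypothesis that the leading monomials of the Gröbner basis of $M$ avoid $x_1,\ldots,x_{p-1}$. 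You instead reinterpret the same hypothesis homologically: avoidance of $x_1,\ldots,x_{p-1}$ in $\ini_{\mathcal G}(M)$ means $x_1,\ldots,x_{p-1}$ is a regular sequence on $G/M$ (via the filtration with factors $S/I'_\ell$), and you exploit Koszul exactness over $G$ and over $G/M$ to adjust a Koszul syzygy until its components lie in $M$ and then, by a multidegree count, in $M_{>j}$. Both proofs use lex-refinement of $\mathcal F$ in the same way, to deduce divisibility of the trailing coefficients $a_i$; the paper's construction is more elementary and self-contained, while yours trades that for a cleaner structural statement at the cost of invoking regular sequences and their propagation along filtrations. Two small points to tidy: the degree of $s_{rk}$ should be $\deg(v)-\epsilon_r$ rather than $\deg(v)+\epsilon_k-\epsilon_r$ (its $r$-th coordinate is $-1$ either way, so the conclusion stands); and when $\deg(e_i)=\deg(e_j)$ the index $r(i)$ is undefined, but then $a_i$ is a scalar times $u$ and hence already divisible by $x_k$, so this case folds harmlessly into the $r(i)\geq k$ branch.
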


\begin{proof}
1. Let $s\in N$ be a multihomogeneous  element, and let $x_r$ be the variable with least index such that $x_r$ divides $\ini(s)$. To demonstrate the desired property of $\ini_{\mathcal F}(N)$, it will be sufficient to show that there exists an element $\tilde{s}\in N$ such that
\begin{enumerate}
\item[(1)] $\ini(s)=\ini(\tilde{s})$;
\item[(2)] $x_r$ divides  $\tilde{s}$, if $r\leq p$.
\end{enumerate}
Indeed, if (1) and (2) are satisfied, then $t=\tilde{s}/x_r\in  N$ and $\ini(s)=x_r\ini(t)$. Thus we may delete $\ini(s)$ as a generator of $\ini_{\mathcal F}(N)$.

\noindent 2. In order to show the existence of $\tilde{s}$ with these properties, we write $s=s'+s''$, where  $s'$ is the sum of all terms of $s$ which are not  divisible by one of the variables $x_1,\ldots,x_{r-1}$, and $s''$ the sum of the other terms in $s$.

Let $s=\sum_{i=1}^ca_iu_ie_i$ with $a_i\in K$ and $u_i\in \Mon(S)$. For simplicity we may assume that $\ini(s)=u_1e_1$. Then, since $s$ is multihomogeneous and since $\mathcal F$ is a lex-refined basis of $F$, it follows that $u_i\leq  u_j$ in the lexicographical order for all $i\leq j$ in the support of $s$. Since $\ini(s')=\ini(s)=u_1e_1$, it follows that $u_1\leq u_i$ for all $i\in \supp(s')$. The monomial $u_1$ is divisible by $x_r$, and no $u_i$ with $i\in \supp(s')$ is divisible by any $x_j$ for $j<r$. Hence the inequality $u_1\leq u_i$ implies that $x_r$ divides all $u_i$ with $i\in \supp(s')$. In other words, $x_r$ divides $s'$.

\noindent 3. Since $s\in N$, it follows that $\varphi(s')=-\varphi(s'')$. We denote this element by $z$. Since $z=\varphi(s')$,  we see that $x_r$ divides $z$, and since $z=-\varphi(s'')$ it follows that each of the terms of $z$ is divisible by at least one $x_i$ with $i<r$.

Let  $\ini(z/x_r)= we_k'$.  Since $\deg (z/x_r)=\deg(s)-\epsilon_r$, where  $\epsilon_r$ is the $r$th canonical basis vector of $\ZZ^n$, it follows that  $\deg(e_k')_j\leq \deg(s)_j$ for $j=1,\ldots,r-1$, and since each term of $z/x_r$ is divisible at least one $x_j$ with $j<r$, we conclude   that $\deg(e_k')_j<\deg(s)_j$ for at least one $j<r$.

\noindent 4. Now we may write $z/x_r=\sum b_iv_ig_i$ with $b_i\in K$ and $v_i\in \Mon(S)$, where the $g_i$ form a reduced Gr\"obner basis of $M$ (with respect to the monomial order induced by $\mathcal G$) and with the  additional property that $\ini(v_ig_i)\leq \ini(z/x_r)=we_k'$ for all $i$. Let $\ini(g_i)= w_{j_i}e_{j_i}'$. Then $j_i\geq k$, and hence $\deg(e_{j_i}')\leq \deg(e_k')$ in the lexicographic order. Thus for all $i$ it follows that $\deg(e_{j_i}')_j\leq \deg(s)_j$ for $j=1,\ldots,r-1$ with strict inequality for at least one $j$.

\noindent 5. According to our hypothesis, we may assume that none of the variables $x_1,\ldots,x_{p-1}$ divides any of the $w_{j_i}$. Thus, together with what we have shown in the last paragraph we see   that for all $i$ we have that $\deg(g_i)_j\leq \deg(s)_j$ for $j=1,\ldots,r-1$,  but with strict inequality for at least one $j<r$. We may now lift each $g_i$ to an element $f_i$ of $F_p$ with the same multidegree, and set
\[
s'''=\sum b_iv_i f_i \quad \text{and} \quad \tilde{s}=s'-x_rs'''.
\]
Obviously,  $x_r$ divides $\tilde{s}$ and $\tilde{s}\in N$. We claim that $\ini(x_rs''')<\ini(s')$. Since $\ini(s)=\ini(s')$, the claim will then imply that  $\ini(\tilde{s})=\ini(s)$, as desired.

In order to prove the claim, assume to the contrary that $\ini(x_rs''')\geq \ini(s')=u_1e_1$. Since $\ini(x_rv_if_i)\geq \ini(x_rs''')$ for some $i$, it follows for this index $i$ that $\ini(x_rv_if_i)\geq u_1e_1$ which implies that  $\ini(f_i)=ve_1$ for some $v\in \Mon(S)$. In particular, $\deg(e_1)_j\leq \deg(f_i)_j$ for all $j$. Since $\deg(f_i)=\deg(g_i)$, and since there exists $j<r$ such that $\deg(g_i)_j<\deg(s)_j=\deg(u_1e_1)_j$, it follows that $\deg(e_1)_j<\deg(u_1e_1)_j$ for some $j<r$. This implies that $x_j$ divides $u_1$, a contradiction.
\end{proof}


\medskip
Let $F$ be a finitely generated $\ZZ^n$-graded $S$-module with multigraded basis ${\mathcal F}=e_1,\ldots,e_m$, and $M$ a $\ZZ^n$-graded submodule of $F$. In general it is not easy to compute $\ini_{\mathcal F}(M)$ explicitly. In the following we describe resolutions  where for suitable bases the initial modules of the syzygies can be simply determined. These bases are however not
necessarily lex-refined.

\begin{Definition}
Let $F_\lpnt$ be the resolution (\ref{1}) with differential $\partial_\lpnt$. It has {\em boundary Gr\"obner bases} if for each $F_p$ there exists a basis ${\mathcal F}_p$ such that
\[
\ini_{{\mathcal F}_p}(Z_p(F_\lpnt))=(\ini_{{\mathcal F}_p}(\partial_{p+1}(e_i))\: e_i\in\mathcal{F}_{p+1}).
\]
\end{Definition}

Resolutions with boundary Gr\"obner bases have the pleasant property that the initial modules of the syzygies can be immediately read off from the matrices describing the differential maps with respect to these bases.

The resolutions we have in mind arise as iterated mapping cones. So let $I\subset S$ be a monomial ideal with monomial generators $u_1,\ldots,u_m$. The iterated mapping cone resolution is constructed inductively by using induction on the number of generators. For $m=1$ it is just the complex $F_\lpnt^{(1)}\: 0\to S(-{\bf a}_1)\to S\to S/(u_1)\to 0$, where ${\bf a}_1$ is the multidegree of $u_1$ and the differential of the complex is multiplication by $u_1$.
Let $I_j$ be the ideal generated by $u_1, \ldots, u_j$, and
suppose for some $j<m$ we have already constructed the resolution $F_\lpnt^{(j)}$ of $S/I_j$. Consider the exact sequence
\[
0\to I_{j+1}/I_j\to S/I_j\to S/I_{j+1}\to 0.
\]
Observe that
\begin{equation} \label{GroEqGj}
I_{j+1}/I_j\iso (S/(I_j\: (u_{j+1}))(-{\bf a}_{j+1}),
\end{equation}
where ${\bf a}_{j+1}=\deg u_{j+1}$. Let $G^{(j)}_\lpnt$ be a $\ZZ^n$-graded free $S$-resolution of this cyclic module and $\varphi^{(j)}\:  G^{(j)}_\lpnt\to F_\lpnt^{(j)}$ a complex homomorphism of $\ZZ^n$-graded complexes extending the inclusion map $I_{j+1}/I_j\to S/I_j$. Then we define $F^{(j+1)}_\lpnt$ as the mapping cone of $\varphi^{(j)}$.

The free resolution obtained by iterated mapping cones is not at all unique. It depends on the choice of the free resolutions $G^{(j)}_\lpnt$ as well as on the complex homomorphisms $\varphi^{(j)}$.

Here are a few prominent examples of resolutions which arise as iterated mapping cones.
\begin{Examples}
\label{examples}
(a) The Taylor complex (cf.\ \cite{Ei}) is an iterated mapping cone. Let $u_1,\ldots,u_m$ be a sequence of monomials. Assuming the Taylor complex for any sequence of monomials of length $m-1$ is already constructed, one constructs the Taylor complex for $u_1,\ldots,u_m$ by choosing for
$F^{(m-1)}_\lpnt$ the Taylor complex of the sequence $u_1,\ldots, u_{m-1}$, and for  $G^{(m-1)}_\lpnt$ one  takes the Taylor complex  for the sequence
$$u_1/\gcd(u_1,u_m),\ldots,u_{m-1}/\gcd(u_{m-1},u_m)$$ which is a system of generators of $(u_1,\ldots,u_{m-1}):(u_m)$.
The map $\varphi_{m-1}$ can be defined in a canonical way.

The Taylor complex provides a $\ZZ^n$-graded free $S$-resolution of $S/(u_1,\ldots,u_m)$ (which in general is not minimal). In case $u_1,\ldots,u_m$ is a regular sequence, the Taylor complex coincides with the Koszul complex of this sequence and is minimal.

(b) A monomial ideal $I\subset S$ is said to have linear quotients, if $I$ is generated by homogeneous polynomials $u_1,\ldots,u_m$ with $\deg u_1\leq \deg u_2 \leq \cdots \leq \deg u_m$ such that each of the colon ideals $L_j=(u_1,\ldots,u_{j})\: (u_{j+1})$ is generated by a subset of the variables. For such an ideal  we can use in the construction of the iterated mapping cone for each $j$ the Koszul complex as $G^{(j)}_\lpnt$ to resolve $S/L_j$. Considering the degrees of the resolutions at each step we see that $\varphi_j(G^{(j)}_\lpnt)\subset \mm F^{(j)}_\lpnt$, so that in this case the iterated mapping cone provides a minimal free $\ZZ^n$-graded resolution of $S/I$.

An important special case is that of  a stable ideal. Recall that a monomial ideal $I$ is called stable if for all monomial $u\in I$ the monomial $x_j(u/x_{m(u)})\in I$  for all $j<m(u)$. Here $m(u)$ is the largest index with the property that $x_{m(u)}$ divides $u$. Let the minimal set of monomial generators $u_1,\ldots,u_m$ of $I$ be ordered in such a way that for $i<j$ either   $\deg u_i <\deg u_j$, or  $\deg u_i=\deg u_j$ and $u_j <u_i$ in the lexicographic order. Then with respect to this sequence of generators,  $I$ has linear quotients. The corresponding iterated mapping cone yields the so-called Eliahou-Kervaire resolution of $S/I$, provided the complex homomorphisms  $\varphi_j$ at each step are chosen properly.
\end{Examples}

The next lemma is a direct  consequence of the following two observations:

\medskip
\noindent
(i) Let $F$ be a finitely generated $\ZZ^n$-graded $S$-module with multigraded basis ${\mathcal F}=e_1,\ldots,e_m$, and $M$ a $\ZZ^n$-graded submodule of $F$ with  $\ini_{\mathcal F}(M)=\Dirsum_{j=1}^mI_je_j$. For $j=1,\ldots,m$ set $F(j)=\Dirsum_{i=j}^mSe_i$ and let $F(m+1)=0$. Then  for the  factors of the induced filtration
\begin{eqnarray}
\label{filtration}
M=M\sect F(1)\supset  M\sect F(2)\supset \cdots \supset M\sect F(m)\supset M\sect F(m+1)=(0)
\end{eqnarray}
we have
\begin{eqnarray}
\label{factors}
M\sect F(j)/ M\sect F(j+1)\iso I_j(-\deg(e_j)).
\end{eqnarray}

\medskip
\noindent
(ii) Let $M$ be a finitely generated $\ZZ^n$-graded module and $N\subset M$  a $\ZZ^n$-graded submodule of $M$, $F_\lpnt$ a $\ZZ^n$-graded free $S$-resolution of $M$,  $G_\lpnt$ a $\ZZ^n$-graded free $S$-resolution of $N$ and $\varphi_\lpnt\: G_\lpnt\to F_\lpnt$ a $\ZZ^n$-graded complex homomorphism which extends the inclusion map $N\to M$. The mapping cone $C_\lpnt$ of $\varphi_\lpnt$ is a $\ZZ^n$-graded free resolution of $M/N$. For the syzygies of these complexes we have for all $i\geq 0$ the following exact sequences
\begin{eqnarray}
\label{syzexact}
0\to Z_i(F_\lpnt)\to Z_i(C_\lpnt)\to Z_{i-1}(G_\lpnt)\to 0,
\end{eqnarray}
where we set $Z_{-1}(G_\lpnt)=N$.

\begin{Lemma}
\label{groebner}
With the notation introduced in (ii), let ${\mathcal G}=g_1,\ldots, g_r$ be a basis of $G_{i-1}$,  ${\mathcal F}=f_1,\ldots, f_s$ a basis of $F_i$ and ${\mathcal C}$ the basis of $C_i$ which is obtained by composing $\mathcal G$ with $\mathcal F$, that is, ${\mathcal C}=g_1,\ldots,g_r,f_1,\ldots,f_s$. Then
\[
\ini_{\mathcal{C}}Z_i(C_\lpnt)= \ini_{\mathcal G}Z_{i-1}(G_\lpnt)\dirsum \ini_{\mathcal F}Z_{i}(F_\lpnt). \]
\end{Lemma}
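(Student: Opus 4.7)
The plan is to exploit the ordering convention in $\mathcal{C}$ --- namely that the basis elements $g_1,\ldots,g_r$ of $G_{i-1}$ all precede $f_1,\ldots,f_s$ of $F_i$ --- together with the short exact sequence (\ref{syzexact}). The key structural observation is that, since any monomial $u g_k$ is larger than any monomial $v f_l$ in the order on $C_i$, a multihomogeneous element $s = (g,f) \in G_{i-1}\dirsum F_i = C_i$ satisfies
\[
\ini_{\mathcal{C}}(s) = \begin{cases} \ini_{\mathcal{G}}(g), & g \neq 0, \\ \ini_{\mathcal{F}}(f), & g = 0. \end{cases}
\]

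For the inclusion $\subseteq$: unwinding the mapping cone differential shows that $(g,f) \in Z_i(C_\lpnt)$ forces $g \in Z_{i-1}(G_\lpnt)$, so the observation above immediately places $\ini_{\mathcal{C}}(s)$ in one of the two summands on the right-hand side. For the reverse inclusion $\supseteq$: every multihomogeneous $f \in Z_i(F_\lpnt)$ yields $(0,f) \in Z_i(C_\lpnt)$ with $\mathcal{C}$-initial term equal to $\ini_{\mathcal{F}}(f)$; and for multihomogeneous $g \in Z_{i-1}(G_\lpnt)$, the chain map identity gives $\partial_F \varphi(g) = \varphi\partial_G(g) = 0$, so exactness of the $\ZZ^n$-graded resolution $F_\lpnt$ supplies a multihomogeneous $f' \in F_i$ with $\partial_F f' = -\varphi(g)$, making $(g, f') \in Z_i(C_\lpnt)$ with $\mathcal{C}$-initial term $\ini_{\mathcal{G}}(g)$.

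The sum in the conclusion is automatically direct because $\ini_{\mathcal{C}} Z_i(C_\lpnt)$, being a monomial submodule of $C_i = G_{i-1}\dirsum F_i$, splits along this underlying direct sum; the identification of each summand with the respective initial module is precisely what the two inclusions above give. I do not expect a real obstacle here: the only point that requires any care is ensuring the multihomogeneous lift in the surjectivity step, which is routine given the $\ZZ^n$-graded resolution hypothesis on $F_\lpnt$.
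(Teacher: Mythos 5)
Your proof is correct and follows exactly the route the paper intends: the paper declares Lemma \ref{groebner} a ``direct consequence'' of observation (ii), i.e.\ the mapping-cone exact sequence (\ref{syzexact}), and gives no further argument; you have simply filled in the details. Your key observation --- that for a multihomogeneous $s=(g,f)\in C_i$ the $\mathcal{C}$-initial term is $\ini_{\mathcal G}(g)$ when $g\neq 0$ and $\ini_{\mathcal F}(f)$ when $g=0$ --- together with the two inclusions via $(g,f)\mapsto g$ and $(0,f)$, and the lifting of a multihomogeneous $g\in Z_{i-1}(G_\lpnt)$ to $(g,f')\in Z_i(C_\lpnt)$ (which is precisely the surjectivity recorded in (\ref{syzexact})), is the intended argument; the directness of the sum is indeed automatic for a monomial submodule of $C_i=G_{i-1}\dirsum F_i$.
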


In the following corollary we consider for the  syzygy modules appearing in the preceding lemma,  Gr\"obner bases with respect to monomial orders induced by the given bases.

\begin{Corollary}
\label{liftback}
A Gr\"obner basis of $Z_i(C_\lpnt)$ is obtained by composing a Gr\"obner basis of
$Z_{i}(F_\lpnt)$ with the preimages of the elements of a Gr\"obner bases of $Z_{i-1}(G_\lpnt)$
with respect to  the epimorphism $Z_{i}(C_\lpnt)\to Z_{i-1}(G_\lpnt)$.
In particular if $F_{\lpnt}$ and $G_{\lpnt}$ have boundary Gr\"obner bases, then $C_{\lpnt}$
has boundary Gr\"obner bases.
\end{Corollary}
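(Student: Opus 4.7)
The plan is to exploit the decomposition
\[
\ini_{\mathcal{C}} Z_i(C_\lpnt) \;=\; \ini_{\mathcal{G}} Z_{i-1}(G_\lpnt) \,\dirsum\, \ini_{\mathcal{F}} Z_i(F_\lpnt)
\]
established in Lemma~\ref{groebner}. A Gr\"obner basis of $Z_i(C_\lpnt)$ is just a subset whose initial terms generate the left hand side, so it suffices to produce elements of $Z_i(C_\lpnt)$ whose initial terms generate each summand on the right.

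For the first summand, take a Gr\"obner basis $z_1,\ldots,z_u$ of $Z_{i-1}(G_\lpnt)$ and, using the surjection $Z_i(C_\lpnt)\to Z_{i-1}(G_\lpnt)$ from (\ref{syzexact}), lift each $z_k$ to a preimage $\tilde z_k\in Z_i(C_\lpnt)\subset C_i = G_{i-1}\dirsum F_i$. By construction the $\mathcal{G}$-component of $\tilde z_k$ equals $z_k$, and since every $\mathcal{G}$-basis term precedes every $\mathcal{F}$-basis term in the order induced by $\mathcal{C}$, we get $\ini_{\mathcal{C}}(\tilde z_k)=\ini_{\mathcal{G}}(z_k)$; in particular the leading term is independent of the choice of preimage. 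For the second summand, take a Gr\"obner basis $h_1,\ldots,h_t$ of $Z_i(F_\lpnt)$; under the inclusion $Z_i(F_\lpnt)\hookrightarrow Z_i(C_\lpnt)$, each $h_k$ becomes an element of $C_i$ whose $\mathcal{G}$-component vanishes, so $\ini_{\mathcal{C}}(h_k)=\ini_{\mathcal{F}}(h_k)$. Hence $h_1,\ldots,h_t,\tilde z_1,\ldots,\tilde z_u$ is a Gr\"obner basis of $Z_i(C_\lpnt)$. The only genuine point to verify is that placing $\mathcal{G}$ before $\mathcal{F}$ in $\mathcal{C}$ matches the direction of (\ref{syzexact}), which is what makes the two computations of $\ini_{\mathcal{C}}$ above behave correctly; once this is observed the argument is purely formal.

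For the boundary Gr\"obner basis assertion, assume $F_\lpnt$ and $G_\lpnt$ have boundary Gr\"obner bases with respect to bases $\mathcal{F}_\lpnt,\mathcal{G}_\lpnt$, and set $\mathcal{C}_j := \mathcal{G}_{j-1}\cup\mathcal{F}_j$. Then $\{\partial_F(f): f\in\mathcal{F}_{i+1}\}$ is a Gr\"obner basis of $Z_i(F_\lpnt)$; viewed inside $C_i$ these elements coincide with $\{\partial_C(f): f\in\mathcal{F}_{i+1}\}$ because the restriction of $\partial_C$ to the $\mathcal{F}$-summand of $C_{i+1}$ equals $\partial_F$. For the Gr\"obner basis $\{\partial_G(g): g\in\mathcal{G}_i\}$ of $Z_{i-1}(G_\lpnt)$, the element $\partial_C(g)\in C_i$ has $\mathcal{G}$-component $\partial_G(g)$, so it is a canonical preimage of $\partial_G(g)$ under the surjection $Z_i(C_\lpnt)\to Z_{i-1}(G_\lpnt)$. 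Combining the two families, $\{\partial_C(e): e\in\mathcal{C}_{i+1}\}$ is a Gr\"obner basis of $Z_i(C_\lpnt)$, so $C_\lpnt$ has boundary Gr\"obner bases with respect to the bases $\mathcal{C}_\lpnt$.
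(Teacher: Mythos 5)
Your proof is correct and takes the route the paper intends: both parts are read off from Lemma~\ref{groebner} together with the sequence (\ref{syzexact}), by noting that lifts of $Z_{i-1}(G_\lpnt)$ have leading terms in the $\mathcal G$-block (which precedes, hence dominates, the $\mathcal F$-block in the order induced by $\mathcal C$), while elements of $Z_i(F_\lpnt)$ inject with zero $\mathcal G$-component. One small point worth tidying: for $g\in\mathcal G_i\subseteq C_{i+1}$ the mapping-cone differential gives $\partial_C(g)=(-\partial_G(g),\varphi_i(g))$, so the $\mathcal G$-component is $-\partial_G(g)$ rather than $\partial_G(g)$; the sign is of course irrelevant for leading terms, so the argument stands.
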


Now Corollary~\ref{liftback} yields

\begin{Corollary}
\label{niceiterated}
Let the resolution $F_\lpnt$ of $S/(u_1, \ldots, u_n)$ be an iterated
mapping cone by resolutions $G^{(j)}$ of (\ref{GroEqGj}) for $j = 1, \ldots, m-1$.
If each $G^{(j)}$ has boundary Gr\"obner bases, then $F_{\lpnt}$ has boundary Gr\"obner bases.
\end{Corollary}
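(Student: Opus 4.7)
The plan is to proceed by induction on $m$, the number of generators of the ideal, with the inductive step being a direct application of Corollary~\ref{liftback} to the mapping cone construction.

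For the base case $m=1$, the resolution $F^{(1)}_\lpnt\: 0\to S(-\mathbf{a}_1)\to S\to S/(u_1)\to 0$ has only one nonzero syzygy module, $Z_0(F^{(1)}_\lpnt)=(u_1)$, which is generated by $\partial_1(e_1)=u_1$; with respect to the (one-element) basis of $F_1$, this trivially satisfies the boundary Gr\"obner basis condition.

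For the inductive step, assume that $F^{(j)}_\lpnt$ has boundary Gr\"obner bases. By construction, $F^{(j+1)}_\lpnt$ is the mapping cone of the complex homomorphism $\varphi^{(j)}\: G^{(j)}_\lpnt\to F^{(j)}_\lpnt$, and the basis of each term $F^{(j+1)}_p = G^{(j)}_{p-1}\dirsum F^{(j)}_p$ is precisely the concatenation of the chosen bases of $G^{(j)}_{p-1}$ and $F^{(j)}_p$, which is exactly the type of basis $\mathcal{C}$ considered in Lemma~\ref{groebner} and Corollary~\ref{liftback}. Since $G^{(j)}_\lpnt$ has boundary Gr\"obner bases by the hypothesis of the corollary, and $F^{(j)}_\lpnt$ has them by the inductive hypothesis, Corollary~\ref{liftback} applies and gives that $F^{(j+1)}_\lpnt$ has boundary Gr\"obner bases. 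Iterating up to $j=m-1$ yields the claim for $F_\lpnt=F^{(m)}_\lpnt$.

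There is no substantial obstacle here: the real work has already been done in Lemma~\ref{groebner} and its consequence Corollary~\ref{liftback}, and this corollary amounts to packaging that content as an induction on the number of generators. The only subtle points to verify are that the base case is set up correctly and that the bases produced at each mapping cone step are the concatenated ones to which Corollary~\ref{liftback} refers---both of which follow at once from the definition of the iterated mapping cone.
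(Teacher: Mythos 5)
Your proof is correct and follows exactly the route the paper intends: the paper simply says ``Now Corollary~\ref{liftback} yields'' and leaves the induction implicit, while you have spelled out the base case $m=1$ and the inductive step via Corollary~\ref{liftback}, noting that the mapping-cone basis of $F^{(j+1)}_p = G^{(j)}_{p-1}\dirsum F^{(j)}_p$ is precisely the concatenated basis $\mathcal C$ of Lemma~\ref{groebner}. No issues.
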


As an application of these observations we obtain

\begin{Proposition}
\label{nice}
The Taylor complex and the iterated mapping cone  of an ideal with linear quotients have
boundary Gr\"obner bases. In particular, the Koszul complex attached to a regular sequence as
well as the Eliahou-Kervaire resolution for stable ideals  have boundary Gr\"obner bases.
\end{Proposition}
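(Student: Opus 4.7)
The plan is to combine Corollary~\ref{niceiterated} with an induction on the number of generators $m$. Both constructions in the proposition are iterated mapping cones whose components $G^{(j)}$ are themselves resolutions of the same type on fewer generators, so the recursion closes on itself.

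First I would handle the Taylor complex by induction on $m$. The base case $m=1$ is the complex $0\to S(-\bfa_1)\to S\to S/(u_1)\to 0$, for which the only nontrivial syzygy (either $Z_1=0$ or, in the extended sense of Theorem~\ref{main}, $Z_0=(u_1)$) is obviously boundary. For the inductive step, Examples~\ref{examples}(a) describes the Taylor complex on $u_1,\dots,u_m$ as the mapping cone of $\varphi^{(m-1)}\colon G^{(m-1)}_{\lpnt}\to F^{(m-1)}_{\lpnt}$, where $F^{(m-1)}_{\lpnt}$ is the Taylor complex on $u_1,\dots,u_{m-1}$ and $G^{(m-1)}_{\lpnt}$ is the Taylor complex on the $m-1$ monomials $u_i/\gcd(u_i,u_m)$. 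Both have boundary Gr\"obner bases by the induction hypothesis, so Corollary~\ref{niceiterated} delivers BGB for the $m$-fold Taylor complex.

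Next I would treat an ideal $I$ with linear quotients. By Examples~\ref{examples}(b), in the iterated mapping cone for $I$ each $G^{(j)}_{\lpnt}$ is a Koszul complex on a subset of variables $x_{i_1},\dots,x_{i_r}$, resolving $S/(x_{i_1},\dots,x_{i_r})$. Since $x_{i_1},\dots,x_{i_r}$ is a regular sequence of monomials, Examples~\ref{examples}(a) tells us this Koszul complex coincides with the Taylor complex on $x_{i_1},\dots,x_{i_r}$. By the Taylor case already proved, each such $G^{(j)}_{\lpnt}$ has boundary Gr\"obner bases, and a second application of Corollary~\ref{niceiterated} yields BGB for the resolution of $S/I$.

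The two explicitly named instances follow immediately: the Koszul complex on a regular sequence coincides with the Taylor complex on that sequence (Examples~\ref{examples}(a)), so it has BGB; and the Eliahou--Kervaire resolution is exactly the iterated mapping cone construction applied to a stable ideal with its canonical linear quotients ordering (Examples~\ref{examples}(b)), so it falls under the second case. There is no real obstacle once Corollary~\ref{niceiterated} is in hand; the only care needed is to recognize that every $G^{(j)}$ arising in either construction is itself a resolution of the very type the proposition is about, on strictly fewer generators, so that the induction on $m$ terminates at the trivial base.
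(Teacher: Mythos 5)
Your proof is correct and essentially the same as the paper's: both induct on the number of monomial generators and apply the mapping-cone corollaries, observing that every $G^{(j)}$ arising in either construction is a Taylor complex (or a Koszul complex, hence a Taylor complex) on fewer generators. A tiny citation note: your single-cone inductive step is directly covered by the last sentence of Corollary~\ref{liftback}, whereas Corollary~\ref{niceiterated} (which you cite, and which the paper's proof uses) is the iterated version requiring boundary Gr\"obner bases for all of $G^{(1)},\ldots,G^{(m-1)}$; the two packagings are interchangeable here since \ref{niceiterated} is itself obtained by iterating \ref{liftback}.
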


\begin{proof}
Let $F_\lpnt$ be the  Taylor complex on a sequence of monomials of length $m$. The complexes $G_\lpnt^{(j)}$ are Taylor complexes on sequences of length $m-1$. Thus by using induction on $m$, it  follows from Corollary~\ref{niceiterated} that $F_\lpnt$ has boundary Gr\"obner bases.
On the other hand, if $F_\lpnt$ is an iterated  mapping cone for an ideal with linear quotients, then all $G_\lpnt^{(j)}$ are Koszul complexes, which are special Taylor complexes, so that all $G^{(j)}_\lpnt$ have boundary Gr\"obner bases. Hence the desired result follows again by applying  Corollary~\ref{niceiterated}.
\end{proof}

To be more concrete let $T_\lpnt$ be the Taylor complex  attached with the sequence $u_1,\ldots,u_m$. For each $p$,   $T_p$ has the following basis: $e_F=e_{i_1}\wedge e_{i_2}\wedge \cdots \wedge e_{i_p}$ with $F=1\leq i_1<i_2<\cdots <i_p\leq m$, and the differential is given by
\[
\partial_p(e_F)= \sum_{j=1}^p(-1)^{j+1}\frac{u_F}{u_{F\setminus\{i_j\}}}e_{F\setminus\{i_j\}},
\]
where for any subset $G\subset [n]$ we let $u_G$ be the least common multiple of the monomials $u_i$ with $i\in G$. If we order the basis elements iteratively as described in Lemma~\ref{groebner}, then
$e_m > e_{m-1} > \cdots > e_1$ and more generally
$e_{i_1}\wedge e_{i_2}\wedge \cdots \wedge e_{i_p}> e_{j_1}\wedge e_{j_2}\wedge \cdots \wedge e_{j_p}$ if for some $k$ one has $i_p=j_p,\cdots,i_{k+1}=j_{k+1}$ and $i_k>j_k$. With this order,  the  elements $e_F$ with
$F\subset [n]$ and  $|F|=p$ form  boundary Gr\"obner bases.
Thus we obtain
\[
\ini(Z_p(T_\lpnt))=\Dirsum_{F\subset [m],\; |F|=p}I_Fe_F,
\]
with
\begin{eqnarray}
\label{initial}
I_F=(\frac{u_{F\union\{i\}}}{u_F} )_{i\in[m],\; i<\min(F)}.
\end{eqnarray}

\section{Stanley depth of syzygies}
\label{2}
In this section we consider lower bounds for the Stanley depth of syzygies.
First we give a lower bound in general for syzygies of $\ZZ^n$-graded submodules of free
modules. Then, in the case of squarefree modules we can give a considerably better bound.
The lower bounds have a form which is natural for syzygies. They essentially increase
by one for each successive syzygy. However the actual behavior of Stanley depth
of successive syzygy modules is probably far from the lower bound.

Our tool to obtain lower bounds for the Stanley depth is the following simple observation.

\begin{Lemma}
\label{sdepthm}
Let $F$ be a finitely generated $\ZZ^n$-graded $S$-module with multigraded basis ${\mathcal F}=e_1,\ldots,e_m$, $M$ a $\ZZ^n$-graded submodule of $F$, and
$\ini_{\mathcal F}(M)=\Dirsum_{j=1}^mI_je_j$. Then
\[
\sdepth  M\geq \min\{\sdepth I_1,\ldots,\sdepth I_m\}.
\]
\end{Lemma}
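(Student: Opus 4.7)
My plan is to reduce the statement to the filtration (\ref{filtration}) of $M$ and prove the following auxiliary fact, which is the real content: for any short exact sequence
\[
0 \to M' \to M \to M'' \to 0
\]
of finitely generated $\ZZ^n$-graded $S$-modules, one has $\sdepth M \geq \min\{\sdepth M',\ \sdepth M''\}$. Once this is established, I iterate on the filtration, whose successive quotients are $I_j(-\deg(e_j))$, to conclude
\[
\sdepth M \ \geq\ \min_j \sdepth\bigl(I_j(-\deg(e_j))\bigr) \ =\ \min_j \sdepth I_j,
\]
using the obvious fact that Stanley depth is invariant under multigraded shift.

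For the auxiliary fact, I would proceed as follows. Fix Stanley decompositions
\[
M' = \Dirsum_i u_i K[Z_i], \qquad M'' = \Dirsum_j v_j K[W_j],
\]
realizing the respective Stanley depths. For each $j$, choose a multihomogeneous lift $\tilde v_j \in M$ of $v_j$. I claim that
\[
M \ =\ \Bigl(\Dirsum_i u_i K[Z_i]\Bigr)\ \dirsum\ \Bigl(\Dirsum_j \tilde v_j K[W_j]\Bigr)
\]
as $\ZZ^n$-graded $K$-vector spaces, and that each summand is a free module over the corresponding polynomial subring of $S$. The freeness of $\tilde v_j K[W_j]$ is immediate because the natural $K[W_j]$-linear map $\tilde v_j K[W_j] \to v_j K[W_j]$ induced by $M \to M''$ is surjective with source and target of the same multigraded Hilbert function, hence an isomorphism. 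The freeness of $u_i K[Z_i]$ inside $M$ is automatic since $M' \sus M$ as $S$-modules.

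It remains to verify the $K$-vector space direct sum decomposition. Given a multihomogeneous $m \in M$, its image $\bar m \in M''$ decomposes uniquely as $\bar m = \sum_j v_j p_j$ with $p_j \in K[W_j]$; then $m - \sum_j \tilde v_j p_j$ lies in $M'$ and decomposes uniquely as $\sum_i u_i q_i$ with $q_i \in K[Z_i]$. This gives existence and, reversing the process via the projection to $M''$ followed by the direct sum decomposition of $M'$, uniqueness. The Stanley depth of this decomposition is $\min\{\sdepth M',\ \sdepth M''\}$, proving the fact.

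The step I expect to be the main (though mild) obstacle is justifying that the lifted pieces $\tilde v_j K[W_j]$ remain free of rank one in $M$ and that the total decomposition is direct as multigraded $K$-vector spaces; the point is that $\ZZ^n$-gradedness and the $K$-linear splitting of the short exact sequence of multigraded vector spaces make this essentially forced, but one must phrase it carefully so as not to need an $S$-module splitting (which in general does not exist).
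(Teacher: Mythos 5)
Your proposal is correct and follows essentially the same route as the paper: reduce to the filtration (\ref{filtration}) with factors $I_j(-\deg(e_j))$ and iterate the short-exact-sequence inequality $\sdepth M \geq \min\{\sdepth M',\sdepth M''\}$. The paper merely cites that inequality as an already-made observation, whereas you supply a clean proof of it by lifting one Stanley decomposition through the multigraded surjection and patching it with the other; that extra argument is correct (the composite $K[W_j]\to \tilde v_j K[W_j]\to v_j K[W_j]$ is a bijection, forcing the middle module to be free, and the direct-sum check is exactly the one you give).
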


\begin{proof} Let $M=M_0\supset M_1\supset \cdots\supset M_r=0$ be any $\ZZ^n$-graded filtration of $M$.
Since, as we already observed,  for a short exact sequence of $\ZZ^n$-graded modules
\[ 0 \ra M^\prime \ra M \ra M^{\prime\prime} \ra 0 \]
one has $\sdepth M \geq \min \{ \sdepth M^\prime, \sdepth M^{\prime\prime} \}$, we deduce that
\[
\sdepth M\geq \max_i\{\sdepth M_i/M_{i+1}\}.
\]
Applying this general fact to the  filtration (\ref{filtration}) induced by $\mathcal F$, the result follows from (\ref{factors}).
\end{proof}

\medskip
Now we present our main results concerning Stanley depth of syzygies.

\begin{Theorem} \label{MainSyz}
Let $M$ be a $\ZZ^n$-graded submodule of a free module, and let $F_{\lpnt}$
be a free resolution as in (\ref{resolution}). 
Then for $p \geq 0$ the $p$'th syzygy module
$Z_p$ has Stanley depth greater than or equal to $p+1$, or it is 
a free module.
\end{Theorem}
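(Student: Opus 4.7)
The plan is to combine Theorem~\ref{main} with Lemma~\ref{sdepthm}, invoking the elementary fact that any nonzero monomial ideal in a polynomial ring has Stanley depth at least one (witnessed, for instance, by partitioning its monomials according to the exponent of some variable dividing a generator).

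Choose a lex-refined basis $\mathcal{F} = e_1, \ldots, e_m$ of $F_p$. By Theorem~\ref{main},
\[
\ini_{\mathcal F}(Z_p) \;=\; \bigoplus_{j=1}^m I_j e_j,
\]
where the minimal monomial generators of each $I_j$ lie in the subring $R := K[x_{p+1}, \ldots, x_n]$. If every $I_j$ vanishes, then $Z_p = 0$ is free. Otherwise, assume first that $0 \leq p \leq n-1$, so $R$ contains at least one variable. Each nonzero $I_j$ is the extension $J_j \cdot S$ of a nonzero monomial ideal $J_j \subseteq R$. Choose any Stanley decomposition $J_j = \bigoplus_\alpha u_\alpha K[Z_\alpha]$ over $R$ with every $|Z_\alpha| \geq 1$. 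Adjoining the variables $x_1, \ldots, x_p$ to each Stanley space yields
\[
I_j \;=\; \bigoplus_\alpha u_\alpha K\bigl[Z_\alpha \cup \{x_1, \ldots, x_p\}\bigr],
\]
a Stanley decomposition of $I_j$ over $S$ in which every piece has at least $p+1$ variables, so $\sdepth_S I_j \geq p+1$. Lemma~\ref{sdepthm} then yields $\sdepth Z_p \geq p+1$.

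For $p \geq n$ the purported lower bound exceeds $n$, so one must instead argue that $Z_p$ is free. This follows from standard homological algebra: since $M$ is a $\ZZ^n$-graded submodule of a free module, it has positive depth, hence $\projdim M \leq n-1$ by Auslander-Buchsbaum; every $\ZZ^n$-graded free resolution of $M$ splits as a direct sum of the minimal resolution (which vanishes in homological degrees $\geq n$) and a trivial, split-exact complex of free modules, whose syzygies are free direct summands in each homological degree. Hence $Z_p$ is free for $p \geq n$.

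I do not anticipate any serious obstacle: once Theorem~\ref{main} is available, the whole argument reduces to a formal scalar-extension of Stanley decompositions combined with Lemma~\ref{sdepthm}, with the only supporting lemma being the well-known positivity of Stanley depth for nonzero monomial ideals. The subtlest point is merely bookkeeping for the edge cases $p = 0$ (where Theorem~\ref{main} is vacuous but the bound $\sdepth M \geq 1$ still holds since $M$ is a nonzero submodule of a free module) and $p \geq n$ (handled above).
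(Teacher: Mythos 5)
Your proof follows the same route as the paper's: choose a lex-refined basis, use Theorem~\ref{main} to push the generators of each $I_j$ into $K[x_{p+1},\dots,x_n]$, argue $\sdepth I_j\ge p+1$, and conclude via Lemma~\ref{sdepthm}. The paper gets $\sdepth I_j\ge p+1$ by citing Cimpoea\c s (any torsionfree $\ZZ^n$-graded module has Stanley depth at least one) together with Lemma~3.6 of \cite{He}; your hands-on scalar extension is exactly what that lemma packages, and your Auslander--Buchsbaum argument for $p\ge n$ fills in a detail the paper only asserts. One small but real flaw: the parenthetical recipe for $\sdepth J\ge 1$ for a nonzero monomial ideal---``partitioning its monomials according to the exponent of some variable dividing a generator''---does not, as stated, produce a \emph{finite} Stanley decomposition. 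For $J=(x,y)\subset K[x,y]$ and the variable $y$, the monomials $m\in J$ with $m/y\notin J$ are $\{x^a: a\ge 1\}\cup\{y\}$, so the collection of candidate Stanley spaces $mK[y]$ is infinite. The fact you want is correct and standard (it is exactly what the paper imports from Cimpoea\c s), so this is only a misfire in the offered justification, not in the structure of the argument; you should either cite it or give a genuine proof. Finally, $p=0$ is not really an edge case: your general argument covers it once $\sdepth I_j\ge 1$ is in hand.
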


\begin{proof}[Proof of Theorem \ref{MainSyz}]
Let $\Fc$ be a lex-refined basis for $F_p$. If $p \geq n$ then $Z_p$ is 
free, so suppose $p <  n$. 
By Theorem \ref{main},
$\ini_{\mathcal F}(Z_p)=\Dirsum_{j=1}^mI_je_j$, where the minimal set of monomial generators
of each of the monomial ideals $I_j$ belongs to $K[x_{p+1},\ldots,x_n]$. But then $\sdepth I_j \geq p+1$. In fact, Cimpoea\c s \cite[Corollary 1.5]{Ci2} showed that the Stanley depth of any  $\ZZ^n$-graded torsionfree $S$-module is at least 1. Hence the asserted  inequality for the Stanley depth of $I_j$ follows from \cite[Lemma 3.6]{He}. Now the desired inequalities for the Stanley depths 
of the syzygy modules follow from (\ref{sdepthm}).
\end{proof}

\begin{Remark}
In general the lower bound $(p+1)$ is probably far too small.
W.Bruns, C.Krattenthaler, and J.Uliczka consider in \cite{BrKr} syzygies of the Koszul
complex. They conjecture that the last half of these syzygies always have Stanley
depth equal to $n-1$.

On the other hand, the bound is sharp for the first syzygy module of any monomial ideal $I\subset S=K[x_1,x_2,x_3]$ with $\dim S/I=0$. Indeed, the predicted Stanley depth of $Z_1(I)$ is at least 2. It cannot be three, because otherwise $Z_1(I)$ would be free.
\end{Remark}

That indeed our lower bound for the Stanley depth is in general far too small can be seen in the following special case.

\begin{Proposition}
\label{regular}
Let $I\subset S$ be a monomial complete intersection minimally generated by $m$ elements, and $Z_p$ the $p$th syzygy module of $S/I$. Then
either $Z_p$ is free or 
\[
\sdepth Z_p\geq n-\lfloor \frac{m-p}{2}\rfloor.
\]
\end{Proposition}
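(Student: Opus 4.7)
The plan is to invoke the explicit description of $\ini(Z_p(T_\lpnt))$ derived at the end of Section~\ref{1} for the Taylor complex $T_\lpnt$ attached to $u_1,\ldots,u_m$. Since $u_1,\ldots,u_m$ is a regular sequence of monomials, the generators are pairwise coprime, so $T_\lpnt$ coincides with the minimal Koszul resolution of $S/I$ and has boundary Gr\"obner bases by Proposition~\ref{nice}. Formula (\ref{initial}) then applies, and because $u_{F\cup \{i\}}/u_F = u_i$ whenever $i\notin F$, it collapses to
\[
\ini(Z_p(T_\lpnt))=\bigoplus_{F\subset [m],\,|F|=p} I_F e_F,\qquad I_F=(u_i:i\in[m],\,i<\min F).
\]
Thus each nonzero $I_F$ is itself a monomial complete intersection generated by $\min F-1$ of the $u_i$'s.

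I would then apply Lemma~\ref{sdepthm}: $\sdepth Z_p\geq \min_F\sdepth I_F$, the minimum running over those $p$-subsets $F\subset[m]$ with $I_F\neq 0$. The quantity $\min F-1$, and hence the subset producing the weakest term in the minimum, is maximized at $F=\{m-p+1,\ldots,m\}$, which yields a complete intersection on exactly $m-p$ of the $u_i$'s. For $p\leq m-1$ this $F$ satisfies $\min F\geq 2$, so the argument reduces to the Stanley depth formula for monomial complete intersections: for any pairwise coprime monomials $v_1,\ldots,v_r\in S$,
\[
\sdepth(v_1,\ldots,v_r)\geq n-\lfloor r/2\rfloor.
\]
Combining these observations gives $\sdepth Z_p \geq n-\lfloor (m-p)/2\rfloor$ in the range $p\leq m-1$.

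In the remaining range $p\geq m-1$ the syzygy module $Z_p$ is itself free: for $p\geq m$ we have $Z_p=0$, while $Z_{m-1}$ is the image of the top differential $\partial_m\colon T_m\to T_{m-1}$, which is injective precisely because $u_1,\ldots,u_m$ is a regular sequence, and hence $Z_{m-1}\iso T_m\iso S$. The only real obstacle in this plan is the appeal to the Stanley depth formula for monomial complete intersections displayed above; this is a nontrivial result due to Cimpoea\c s and others and must either be cited from the Stanley-depth literature or be proved separately, typically by induction on the number of generators together with an explicit splitting off of the summand of multiples of $v_r$.
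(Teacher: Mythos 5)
Your proposal matches the paper's proof exactly: both use the Taylor (=Koszul) complex for the regular sequence, formula (\ref{initial}) to identify $I_F=(u_i:i<\min F)$, Lemma~\ref{sdepthm} to reduce to $\min_F\sdepth I_F$, and the Stanley depth formula for monomial complete intersections. The only slip is the attribution of that last formula — the paper cites Shen~\cite{S} rather than Cimpoea\c s — and the paper is less explicit than you are about the edge cases where $I_F=0$ or $Z_p$ is free, but these are cosmetic differences.
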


\begin{proof} Let $u_1,\ldots,u_m$ be the regular sequence generating $I$. The Taylor complex associated with this sequence, which in this case is the Koszul complex, is a minimal free resolution of $S/I$. With the notation of (\ref{initial}) we have $I_F=(u_i)_{i\in[m],\; i<\min(F)}$, so that $\sdepth Z_p\geq \min\{\sdepth (u_i\:\; i<\min(F))\}$. By a result of Shen \cite{S} one has $\sdepth J=n-\lfloor m/2\rfloor$ for a monomial ideal $J\subset K[x_1,\ldots,x_n]$ generated by a regular sequence of length $m$. This yields the desired conclusion.
\end{proof}

In the case where $M$ is a squarefree ideal or more generally a squarefree module, we also
get better bounds. Recall that a $\ZZ^n$-graded $S$-module $M$ is
{\it squarefree} (defined by K.Yanagawa \cite{Ya}),
if it fulfils the following.
\begin{itemize}
\item  $M_{\bfa}$ is nonzero only if $\bfa \in \NN^n$.
\item When $\bfa$ is in $\NN^n$, with nonzero $i$'th coordinate, and $\epsilon_i$
is the $i$'th unit coordinate vector, the multiplication map
\[ M_\bfa \mto{\cdot x_i} M_{\bfa + \epsilon_i} \]
is an isomorphism of vector spaces.
\end{itemize}
Squarefree modules form an abelian category with squarefree projective covers.
In  particular kernels of morphisms of squarefree modules are squarefree, and so syzygies
modules in a squarefree resolution of a squarefree module, are squarefree.

\begin{Lemma}
Let $M$ be a squarefree submodule of a free module $F$. Then for any
term ordering on $F$, the initial module $\ini(M)$ is a squarefree module.
\end{Lemma}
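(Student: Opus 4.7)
The plan is to show that writing $\ini(M)=\bigoplus_{j=1}^m I_je_j$, each monomial ideal $I_j$ is a squarefree monomial ideal whose minimal generators have support disjoint from $\supp(\deg e_j)$; from this I will deduce that each summand $I_je_j$ is a squarefree $S$-module, and hence so is the direct sum $\ini(M)$.

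First, I would note that since $M$ is $\ZZ^n$-graded, $\ini(M)$ is generated by the initial terms $\ini(m)$ as $m$ ranges over the multihomogeneous elements of $M$. The squarefree hypothesis gives a strong structural constraint on these elements: for any $\bfa\in\NN^n$, writing $\bfa^*=(\min(a_1,1),\ldots,\min(a_n,1))$ for the squarefree part, the iterated multiplication map $M_{\bfa^*}\mto{\cdot x^{\bfa-\bfa^*}}M_{\bfa}$ is an isomorphism. Thus every multihomogeneous $m\in M$ of multidegree $\bfa$ has the form $m=x^{\bfa-\bfa^*}\tilde m$ with $\tilde m\in M_{\bfa^*}$, and consequently $\ini(m)=x^{\bfa-\bfa^*}\ini(\tilde m)$. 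So every minimal generator of $\ini(M)$ is $\ini(\tilde m)$ for some multihomogeneous $\tilde m\in M$ of squarefree multidegree.

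Next I would analyze such an initial term. If $\ini(\tilde m)=ue_j$ with $\tilde m$ of multidegree $\bfa^*\in\{0,1\}^n$, then $\deg(u)+\deg(e_j)=\bfa^*$; since both summands lie in $\NN^n$ and their sum is in $\{0,1\}^n$, each of $\deg(u)$ and $\deg(e_j)$ lies in $\{0,1\}^n$ and they have disjoint supports. Running over all minimal generators, this says that each $I_j$ (for which $e_j$ actually appears) is generated by squarefree monomials whose supports avoid $\supp(\deg e_j)$, and in particular $I_j$ is a squarefree monomial ideal.

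Finally I would verify that, under the disjoint-support condition, $I_je_j\cong I_j(-\deg e_j)$ is squarefree in Yanagawa's sense. Checking the multiplication-iso condition for a given $\bfa\in\NN^n$ with $a_i\geq 1$ splits into two cases. If $\bfa\not\geq \deg(e_j)$, then some coordinate $k$ has $a_k=0$ and $(\deg e_j)_k=1$, forcing $k\neq i$ (since $a_i\geq 1$), so $\bfa+\epsilon_i\not\geq\deg(e_j)$ as well and both graded pieces vanish. If $\bfa\geq\deg(e_j)$, set $\bfb=\bfa-\deg(e_j)\in\NN^n$ and reduce to the iso $(I_j)_\bfb\mto{\cdot x_i}(I_j)_{\bfb+\epsilon_i}$; when $(\deg e_j)_i=0$ we have $b_i=a_i\geq 1$ and this iso holds because $I_j$ is squarefree, while when $(\deg e_j)_i=1$ and $b_i=0$ the disjoint-support condition guarantees $x_i\notin I_j$, so both sides are zero.

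The part requiring genuine care is this final step: a shift of a squarefree module by a squarefree degree is not automatically squarefree (for instance $S(-\epsilon_i)$ is not squarefree), and the main point of the argument is that the combinatorial constraint on the generators of $I_j$ coming from the squarefreeness of $M$ is exactly what rules out the bad case.
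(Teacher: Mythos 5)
Your approach is genuinely different from the paper's. The paper verifies the squarefree condition directly on graded pieces: one chooses a basis $g_1,\dots,g_p$ of $M_\bfa$ whose (distinct) initial terms form a basis of $\ini(M)_\bfa$; since $M$ is squarefree and $a_i\geq 1$, the elements $x_ig_1,\dots,x_ig_p$ form a basis of $M_{\bfa+\epsilon_i}$, and their initial terms $x_i\ini(g_r)$ are again distinct and therefore a basis of $\ini(M)_{\bfa+\epsilon_i}$; thus multiplication by $x_i$ carries basis to basis. This takes a few lines and never analyses the decomposition $\Dirsum_j I_je_j$. Your route is longer but extracts the useful combinatorial fact that each $I_j$ is a squarefree ideal generated in variables disjoint from $\supp(\deg e_j)$ --- information the paper actually invokes afterwards in the proof of Theorem~\ref{squarefree} --- so it is a worthwhile alternative. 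Do note that your second paragraph silently assumes $\deg(e_j)\in\NN^n$; this holds in the paper's application (where $F$ is a term of a squarefree resolution) but is not among the lemma's stated hypotheses, whereas the paper's argument does not need it.

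There is, however, an error in your last case. When $(\deg e_j)_i=1$ and $b_i=0$ you claim that $(I_j)_{\mathbf{b}}$ and $(I_j)_{\mathbf{b}+\epsilon_i}$ both vanish because $x_i\notin I_j$. That is false: take $I_j=(x_2)$, $\deg e_j=\epsilon_1$, $i=1$, $\mathbf{b}=\epsilon_2$; both graded pieces are one-dimensional. The needed isomorphism does hold, but the correct reason is different: if $x^{\mathbf{b}+\epsilon_i}\in I_j$ then it is divisible by some minimal generator $v$ of $I_j$; by the disjoint-support condition $x_i\nmid v$, and since $b_i=0$ this forces $v\mid x^{\mathbf{b}}$, so $x^{\mathbf{b}}\in I_j$ and multiplication by $x_i$ is surjective (injectivity being clear). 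This is exactly where the disjoint-support condition earns its keep, so it is worth stating accurately. You also omit the sub-case $(\deg e_j)_i=1$, $b_i\geq 1$, though that one follows from squarefreeness of $I_j$ just as your first sub-case does.
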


\begin{proof} Let $g_1, \ldots, g_p$ be a basis for $M_\bfa$, such that the
$\ini(g_r) = u_r e_{i_r}$ are a basis for $\ini(M)_{\bfa}$, and suppose $a_i \neq 0$.
Then $x_i g_1, \ldots, x_i g_p$ are a basis for $M_{\bfa + \epsilon_i}$, and their
initial terms are the $x_i u_r e_{i_r}$ which form a basis for
$\ini(M_{\bfa + \epsilon_i})$.
\end{proof}

In the last section we show that for squarefree ideals there is a considerably better lower
bound for the Stanley depth than $1$, which we use in Theorem \ref{MainSyz}.
Using this we get the following.

\begin{Theorem}
\label{squarefree}
 Let $M$ be a squarefree submodule of a free module, with $d+1$ the  smallest
degree of a generator of $M$. Let $s$ be the largest integer such that
$(2s+1)(s+1)  \leq n+1-d-p$. Then for $p \geq 1$
the $p$'th  syzygy module in a squarefree
resolution of $M$ is either free, or it has Stanley depth
greater or equal to $2s+1+d+p$. 
Explicitly this is
\[ 2 \left \lfloor  \frac{\sqrt{2n-2d-2p+2.25}+0.5}{2} \right \rceil +d+p-1. \]
\end{Theorem}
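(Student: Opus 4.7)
I would start by choosing a lex-refined basis $\mathcal{F} = e_1, \ldots, e_m$ of $F_p$ in a minimal squarefree resolution of $M$. Theorem \ref{main} gives $\ini_{\mathcal{F}}(Z_p) = \bigoplus_{j=1}^m I_j e_j$ with each $I_j$ generated by monomials in $K[x_{p+1}, \ldots, x_n]$, and the squarefree lemma preceding this theorem ensures that each $I_j$ is squarefree (since $Z_p$, being a syzygy in a squarefree resolution, is itself squarefree). By Lemma \ref{sdepthm}, it then suffices to bound each $\sdepth_S(I_j)$ from below by $2s+1+d+p$; if all $I_j = 0$ then $Z_p = 0$, which is trivially free.

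For each $j$ with $I_j \neq 0$, squarefreeness of the initial terms $u_j e_j$ forces $\supp(u_j) \subseteq V_j := \{x_{p+1}, \ldots, x_n\} \setminus \supp(\deg e_j)$. Setting $t_j := |\supp(\deg e_j) \cap \{x_{p+1}, \ldots, x_n\}|$, we have $|V_j| = n-p-t_j$; and since $M$ is generated in degrees $\geq d+1$, a minimal squarefree resolution has $|\deg e_j| \geq d+p$, whence $t_j \geq d$ given that $|\supp(\deg e_j) \cap \{x_1, \ldots, x_p\}| \leq p$. Writing $I_j = J_j \cdot S$ for $J_j \subseteq K[V_j]$ a squarefree ideal, the extension formula $\sdepth_S(I_j) = \sdepth_{K[V_j]}(J_j) + p + t_j$ combined with Theorem \ref{SqfreeStDe} applied to $J_j$ in $K[V_j]$ yields $\sdepth_S(I_j) \geq 2 s_j' + 1 + p + t_j$, where $s_j'$ is the largest integer with $(2 s_j'+1)(s_j'+1) \leq n-p-t_j+1$. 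For $t_j = d$ this gives exactly $s_j' = s$ and the target bound $2s+1+d+p$.

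The main obstacle will be extending this bound uniformly to $j$ with $t_j > d$. As $t_j$ grows the constraint on $s_j'$ tightens, so $s_j'$ may decrease; a trade-off analysis comparing the additive gain in $t_j$ against the potential drop in $s_j'$ is required. The delicate case arises when $(2s+1)(s+1) = n+1-d-p$ is tight: a unit increase in $t_j$ above $d$ can push $s_j'$ down by one, making the naive bound $2s_j'+1+p+t_j$ fall one short of $2s+1+d+p$. Resolving this will require either a sharpening of the application of Theorem \ref{SqfreeStDe} that exploits the constraint that $J_j$'s generators avoid the $t_j$ variables of $\supp(\deg e_j) \cap \{x_{p+1}, \ldots, x_n\}$, or a structural observation that such $(s_j, t_j)$ configurations cannot all occur simultaneously among the minimal squarefree syzygies.
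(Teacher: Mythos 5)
Your approach is essentially the same as the paper's (squarefree initial module, generators of $I_j$ avoid the support of $\deg e_j$, extension formula, Theorem~\ref{SqfreeStDe}), but your ``main obstacle'' in the last paragraph is not a genuine obstruction: it is an artifact of applying Theorem~\ref{SqfreeStDe} in the \emph{smallest} possible polynomial ring $K[V_j]$ rather than in a ring of exactly $n-d-p$ variables. You restrict $I_j$ to $K[V_j]$ with $|V_j| = n-p-t_j$ and then extend, getting the bound $2s_j'+1+p+t_j$, which (as you observe) can dip below the target when $t_j>d$. The fix is to enlarge: since $|V_j|\le n-d-p$, pick any $V_j'\supseteq V_j$ with $|V_j'|=n-d-p$. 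Then $I_j = J_j'\cdot S$ for the squarefree ideal $J_j' = J_j\cdot K[V_j']$ in $K[V_j']$, and the extension formula from \cite[Lemma 3.6]{He} gives $\sdepth_S(I_j)=\sdepth_{K[V_j']}(J_j')+(d+p)$. Theorem~\ref{SqfreeStDe} depends only on the number of ambient variables, not on how many actually occur in the generators, so it gives $\sdepth_{K[V_j']}(J_j')\ge 2s+1$ for $s$ the largest integer with $(2s+1)(s+1)\le n-d-p+1$ --- uniformly in $j$, regardless of $t_j$. The trade-off analysis you worried about never arises.

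Two smaller remarks. First, the paper does not invoke Theorem~\ref{main} here at all: squarefreeness of the initial term $u_j e_j$ alone already forces $\supp(u_j)$ to be disjoint from $\supp(\deg e_j)$, which has at least $d+p$ elements in a minimal squarefree resolution, so $I_j$ involves at most $n-d-p$ variables. Your use of Theorem~\ref{main} plus the estimate $t_j\ge d$ is a correct (though roundabout) way to reach the same point. Second, the theorem is stated for \emph{any} squarefree resolution, not just the minimal one; you silently assume minimality. The paper closes this gap by observing that a non-minimal $p$'th syzygy module $Z_p'$ satisfies $Z_p'\cong Z_p\oplus F$ with $F$ free and $Z_p$ the minimal syzygy, so $Z_p'$ is either free or has $\sdepth Z_p' \ge \min(\sdepth Z_p,\, n)=\sdepth Z_p$.
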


\begin{proof} Use a term order as in (\ref{order}) on the $p$'th term $F_p$ 
in the resolution. We get
\begin{equation} \label{StaEqIe} \ini(Z_p) = \Dirsum_{j=1}^mI_je_j,
\end{equation}
where $I_j$ is a squarefree monomial ideal.
Since $\ini(Z_p)$ is a squarefree module, each generator $g_{ji}e_j$ of
$I_j e_j$ is a squarefree term.

Suppose first that the resolution is minimal. Then
the total degree of $e_j$ is
at least $d+p$ where $d+p \leq n$. 
Hence the generators of $I_j$ involves no more than $n-d-p$ variables,
corresponding to the coordinates of the multidegree of $e_j$ which are zero.
The result then follows from Lemma~\ref{sdepthm}  and Theorem~\ref{SqfreeStDe}.

In the case that the resolution 
is not necessarily minimal, the syzygies $Z^\prime_p$ of such a resolution 
differ from the syzygies $Z_p$ of the minimal free resolution by a free
summand, that is, $Z^\prime_p=Z_p\oplus F$, so either 
$Z^\prime_p$ is free or it has Stanley depth greater or equal to 
the Stanley depth of $Z_p$.
\end{proof}



\section{Stanley depth of squarefree monomial ideals}
\label{4}

In this section we show that the Stanley depth of any squarefree monomial ideal
in $n$ variables, is bounded below by a bound of order  $\sqrt{2n}$.
This is quite in contrast to ordinary
depth where for instance the maximal ideal $(x_1, \ldots, x_n)$ has depth one.

Our argument is based on a construction
of interval partitions \cite{Ke} by M.Keller et.\ al.\ which is further refined
M.Ge, J.Lin, and Y.-H.Shen in \cite{Lin}. The argument is an application of
Proposition 3.5 in \cite{Lin}.

\medskip
We recall the construction of \cite{Ke}. Let $[n] = \{1,2, \ldots, n\}$. For subsets
$A$ and $B$ of $[n]$, the interval $[A,B]$ consists of the subsets $C$ of $[n]$ such
that $A \sus C \sus B$. We think of the elements of $[n]$ arranged clockwise around
the circle and for $i,j$ in $[n]$ let the block $[i,j]$ be the set of points starting
with $i$, going clockwise, and ending with $j$. Now given $A \sus [n]$, and a
real number $\delta \geq 1$, called a {\it density}, the {\it block structure
of $A$} with respect to $\delta$ is a partition of the elements of $[n]$ into
connected blocks $B_1, G_1, B_2, G_2, \ldots, B_p, G_p$ fulfilling the following.
\begin{itemize}
\item The first (going clockwise) element of $b_i$ of $B_i$ is in $A$.
\item Each $G_i$ is disjoint from $A$.
\item For each $B_i$ we have
\[\delta \cdot |A \sect B_i| - 1 < |B_i| \leq \delta \cdot
|A \sect B_i|.\]
\item For each $y$ such that $[b_i,y]$ is a proper subset of $B_i$, we have
\[|[b_i, y]| + 1 \leq \delta \cdot |[b_i,y] \sect A|.\]
\end{itemize}
For $1 \leq \delta \leq \frac{(n-1)}{|A|}$ the block structure for a subset $A$ exists and
is unique by Lemma 2.7 in \cite {Ke}.
Let ${\mathscr G}_\delta$ be the union $G_1 \union \cdots \union G_p$.
We then define the set $f_\delta(A)$ to be $A \union {\mathscr G}_\delta(A)$. The
intervals we shall study will now be of the form $[A,f_\delta(A)]$ or closely related.

For certain values of $n$ and cardinalities of $A$, the intervals fulfil
some very nice properties. The following are basic facts from \cite{Ke}. It is a
synopsis of Lemma 3.1, Lemma 3.2 and Lemma 3.5 there.
 \begin{Lemma} Let  $n = as + a + s$.
\begin{itemize}
\item[1.] If $A \sus [n]$ is an $a$-set, then $f_{s+1}(A)$ is and $(a+s)$-set.
\item[2.] The intervals $[A,f_{s+1}(A)]$ are disjoint when $A$ varies over the $a$-sets.
\end{itemize}
\end{Lemma}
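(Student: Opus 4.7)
For part (1), the plan is direct: the defining bounds $(s+1)|A \cap B_i| - 1 < |B_i| \leq (s+1)|A \cap B_i|$ trap the integer $|B_i|$ strictly between two consecutive integers, and so they force the equality $|B_i| = (s+1)|A \cap B_i|$ for every block. Summing over $i$ and using $\sum_i |A \cap B_i| = |A| = a$ yields $\sum_i |B_i| = (s+1)a$, so $|\mathscr{G}_{s+1}(A)| = n - (s+1)a = s$ by the hypothesis $n = as+a+s$. Since $f_{s+1}(A) = A \cup \mathscr{G}_{s+1}(A)$ is a disjoint union (the $G_i$'s are disjoint from $A$ by construction), we obtain $|f_{s+1}(A)| = a+s$.

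For part (2), the plan is to show that $A$ can be intrinsically reconstructed from any $C \in [A, f_{s+1}(A)]$, which immediately implies that two distinct $a$-sets yield disjoint intervals. The key observation enabling a reconstruction is that $C \cap B_i = A \cap B_i$ for every block $B_i$, since the additional elements in $C \setminus A$ all lie inside $\mathscr{G}_{s+1}(A)$, which is disjoint from the $B_i$. Thus the blocks appear ``opaquely'' inside $C$, and the task reduces to identifying their boundaries using only $C$ and the density $\delta = s+1$.

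To locate these boundaries I would use a greedy construction on the cycle, starting from a canonical origin such as the clockwise endpoint of a longest $C$-free arc. Walking clockwise, mark the first element of $C$ met after a $C$-free stretch as the start $b_i$ of a new block, extend the block as long as the local $C$-density exceeds $1/(s+1)$, and declare the subsequent non-$C$ stretch to be $G_i$. The recovered block starts together with the $C$-elements inside each block should then give back $A$. Here the tight equality $|B_i| = (s+1)|A \cap B_i|$ from part (1) is what blocks a rival $a$-set $A'$ from producing the same $C$: any reshuffling of which elements are labelled ``in $A$'' versus ``in the gap'' would necessarily violate either the upper or the lower density bound somewhere along the cycle, because there is no slack.

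The main obstacle is making the greedy reconstruction unambiguous and proving that it agrees with the actual block structure of $A$ for every $C$ in the interval. The difficulty is fundamentally cyclic: the circle has no intrinsic origin, so the choice of starting point requires care (the chosen $C$-free arc must be shown to be unambiguous). I expect to handle this by induction on the number of blocks $p$, first linearising around the chosen origin, running the greedy procedure to reconstruct $B_1, G_1, \ldots, B_p, G_p$, and then verifying by the density condition in part (1) that no alternative partition of the cycle is compatible with $C$. This is the combinatorial content pulled together in Lemmas 3.1, 3.2 and 3.5 of \cite{Ke}, and my approach essentially follows that route.
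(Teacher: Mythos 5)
The paper offers no proof of this lemma: it is stated explicitly as a synopsis of Lemmas 3.1, 3.2 and 3.5 of Keller et al.\ \cite{Ke}, so there is no in-paper argument for your attempt to be matched against. Your part (1) is correct and complete as written. With $\delta = s+1$ an integer, the quantity $(s+1)|A\cap B_i|$ is an integer, so the strict lower bound and the weak upper bound in the block-structure axioms together force $|B_i| = (s+1)|A\cap B_i|$ for every block; since each $G_i$ is disjoint from $A$, one has $A \subseteq \bigcup_i B_i$, hence $\sum_i |A \cap B_i| = a$ and $\sum_i |B_i| = (s+1)a$, giving $|\mathscr{G}_{s+1}(A)| = n - (s+1)a = s$ and $|f_{s+1}(A)| = a+s$.

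For part (2) you have the right framing (reconstruct $A$ from an arbitrary $C$ in the interval, using that $C \cap B_i = A \cap B_i$ because $C \setminus A \subseteq \mathscr{G}_{s+1}(A)$), but the write-up is a plan rather than a proof. You propose a greedy procedure around the cycle and then explicitly flag, without resolving, its two essential difficulties: the circle has no canonical origin, so your rule for choosing a starting $C$-free arc needs a uniqueness justification; and the greedy blocks must be shown to coincide with the true block structure of $A$, excluding every rival $a$-set $A'$ with $C \in [A', f_{s+1}(A')]$. You then defer back to the very lemmas of \cite{Ke} being summarized, which is circular as a self-contained argument. To close the gap you would either have to carry the greedy reconstruction through in detail --- showing that the running-prefix inequality $|[b_i,y]|+1 \le (s+1)\,|[b_i,y]\cap A|$ for proper prefixes, together with the tight equality $|B_i| = (s+1)|A\cap B_i|$ established in part (1), determines the block boundaries from $C$ alone --- or, closer to the route actually taken in \cite{Ke}, argue directly by contradiction from an element in the symmetric difference of two competing $a$-sets. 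As written, part (2) has a genuine gap.
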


We are interested in getting disjoint intervals, but we need a way to adopt the above
lemma to the case of arbitrary $n$, and to be able to vary $a$ and $s$. The following
still fixes $s$ and $a$ but allows $n$ to be arbitrary above a bound. It is Proposition 3.3 in
\cite{Lin}.

\begin{Proposition} Let $n \geq as + a + s$ and $A \sus [n]$ an $a$-set.
Consider $\tilde{A} = A \union \{n+1, \ldots, n+n-a\}$ as a subset of
$[ns + n + s]$.
\begin{itemize}
\item[1.] The intersection $f_{s+1}(\tilde{A}) \sect [n]$ is an $(a+s)$-set.
\item[2.] The intervals $[A, f_{s+1}(\tilde{A}) \sect [n]]$ are disjoint as $A$ varies
over the $a$-sets.
\end{itemize}
\end{Proposition}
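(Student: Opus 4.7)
The plan is to derive the Proposition from the previous Lemma by treating $\tilde A$ as an $n$-set in the larger ambient $[ns+n+s]$ and then restricting back to $[n]$. With the substitution $a' = n$, $s' = s$, the ambient size $a's'+a'+s' = ns+n+s$ matches, so the Lemma applies directly to $\tilde A$. This gives immediately that (i) $f_{s+1}(\tilde A)$ is an $(n+s)$-set and (ii) the intervals $[\tilde A, f_{s+1}(\tilde A)]$ are pairwise disjoint as $\tilde A$ varies over all $n$-subsets of $[ns+n+s]$.

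The crux of the argument is the following key claim: the gap set $\mathscr{G}_{s+1}(\tilde A) = f_{s+1}(\tilde A) \setminus \tilde A$ is contained in $[n]$, equivalently, it is disjoint from the padding region $R := \{2n-a+1,\ldots,ns+n+s\}$. This is where the hypothesis $n \geq as+a+s$ enters. One argues via the block structure: let $B^*$ be the block containing the dense stretch $\{n+1,\ldots,2n-a\} \sus \tilde A$. Since $|B^* \sect \tilde A| \geq n-a$, the block-size condition yields $|B^*| \geq (s+1)(n-a)$. A direct rearrangement shows the hypothesis $n \geq as+a+s$ is equivalent to $(s+1)(n-a) \geq (n-a) + |R|$, so $B^*$ is long enough, after accommodating the dense stretch, to absorb all of $R$. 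Because $R$ is the clockwise arc of non-$\tilde A$ elements immediately following the dense stretch (no $\tilde A$-element interrupts it), the proper prefix condition allows the greedy forward extension from $n+1$ to reach at least $n + (s+1)(n-a) = (s+2)n - (s+1)a \geq ns+n+s$, so $B^*$ covers $R$ (possibly wrapping back into $[n]$). Hence no gap of $\tilde A$ meets $R$.

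From the key claim, $f_{s+1}(\tilde A) \sect \{n+1,\ldots,ns+n+s\} = \{n+1,\ldots,2n-a\}$ of cardinality $n-a$, so $|f_{s+1}(\tilde A) \sect [n]| = (n+s)-(n-a) = a+s$, which is part 1. For part 2, suppose $C \sus [n]$ lies in both $[A_1, f_{s+1}(\tilde A_1) \sect [n]]$ and $[A_2, f_{s+1}(\tilde A_2) \sect [n]]$. Set $\tilde C := C \union \{n+1,\ldots,2n-a\}$. Since the padding $\{n+1,\ldots,2n-a\}$ is common to both $\tilde A_1$ and $\tilde A_2$, one checks $\tilde A_i \sus \tilde C \sus f_{s+1}(\tilde A_i)$ for $i=1,2$, contradicting the pairwise disjointness in (ii). The main obstacle is the key claim; verifying rigorously that $B^*$'s forward extension (as opposed to merely extending backward into $[n]$ to pick up $A$-elements) captures all of $R$ requires care with the cyclic block-partition definition, the proper prefix condition, and the uniqueness of the block structure, with the numerical inequality from $n \geq as+a+s$ being precisely what makes the covering possible.
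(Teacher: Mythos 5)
The paper does not prove this Proposition; it simply cites it as Proposition 3.3 of \cite{Lin}, so there is no in-paper argument to compare against. I will therefore assess your attempt on its own merits.

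Your overall strategy is sound, and the key numerical observation is exactly right: applying the previous Lemma with $a'=n$, $s'=s$ to the $n$-set $\tilde A\subset[ns+n+s]$, and then reducing both assertions to the claim that the gap set $\mathscr G_{s+1}(\tilde A)$ avoids the padding region $R=\{2n-a+1,\dots,ns+n+s\}$; and your identity $(s+1)(n-a)\geq(n-a)+|R|\iff n\geq as+a+s$ checks out. Your deduction of part 2 via $C\mapsto\tilde C=C\cup\{n+1,\dots,2n-a\}$, which embeds each interval $[A,f_{s+1}(\tilde A)\cap[n]]$ into $[\tilde A,f_{s+1}(\tilde A)]$, is also correct. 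Two points, however, are stated rather than proved. First, you write ``let $B^*$ be the block containing the dense stretch'' --- but a priori the dense stretch $\{n+1,\dots,2n-a\}$ could be split across several adjacent blocks, and one must rule this out. This follows from the prefix condition: if a block started at some $b'=n+k$ with $k\geq2$, the preceding block $B'$ would end at $n+k-1$, and combining $|B'|=(s+1)|B'\cap\tilde A|$ with the prefix inequality at $y=n$ forces $s-1\geq sk$, which is impossible for $s\geq1,k\geq1$. So the block $\hat B$ containing $2n-a$ starts at $\hat b\leq n+1$, hence contains the whole dense stretch. Second, the sentence about ``the greedy forward extension from $n+1$'' needs to be replaced by an actual bound on the clockwise endpoint of $\hat B$. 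If $\hat b=n+1$, then $|\hat B|=(s+1)(n-a)$ and the endpoint is $n+(s+1)(n-a)\geq ns+n+s$ precisely by the hypothesis. If $\hat b\leq n$, then using the prefix inequality $(s+1)a_1\geq n-\hat b+2$ (where $a_1=|[\hat b,n]\cap A|$) together with $(s+1)(n-a)\geq s(n+1)$ shows the endpoint is at least $ns+n+s+1$. Either way $R\subset\hat B$, so $R$ contains no gap elements. In short: your plan is correct and the arithmetic is right, but the two block-structure facts you flag as ``requiring care'' are genuine proof obligations, not routine checks, and should be carried out via the prefix condition as indicated.
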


Finally we need to be more flexible with $a$ and $s$, and still have disjoint intervals.
The following is Proposition 3.5 of \cite{Lin} specialized to the case when $d=1, d+q = a$
and $d+l = b$.
\begin{Proposition} \label{4disjoint}
Let $A$ and $B$ be subsets of $[n]$ of cardinalities $a \leq b$. 
Suppose $s^\prime \leq s$ are non-negative integers such that
\[ n+1 \geq (b+1)(s^\prime + 1) \geq (a+1)(s+1). \]
Consider $\tilde{A}$ as a subset of $[ns+n+s]$ and $\tilde{B}$ as a subset of
$[ns^\prime + n + s^\prime]$. Then if $B$ is not in $[A, f_{s+1}(\tilde{A}) \sect[n]]$,
this  interval is disjoint from the interval $[B, f_{s^\prime + 1}(\tilde{B}) \sect [n]]$.
\end{Proposition}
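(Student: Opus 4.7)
The plan is to argue by contradiction: suppose some set $C$ lies in the intersection of the two intervals $[A, f_{s+1}(\tilde{A}) \sect [n]]$ and $[B, f_{s'+1}(\tilde{B}) \sect [n]]$. Then $A, B \sus C$, so in particular $B \sus C \sus f_{s+1}(\tilde{A}) \sect [n]$, which already delivers the ``upper'' containment needed for $B$ to lie in the first interval. It therefore suffices to derive $A \sus B$, because then $B \in [A, f_{s+1}(\tilde{A}) \sect [n]]$, contradicting the hypothesis.

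To show $A \sus B$, assume for contradiction that there is an element $x \in A \setminus B$. Since $x \in C \sus f_{s'+1}(\tilde{B}) \sect [n]$ and $x \notin B$, the element $x$ must lie in the gap set ${\mathscr G}_{s'+1}(\tilde{B}) \sect [n]$, hence inside some gap $G_j$ of the block structure of $\tilde{B}$ at density $s'+1$. On the other hand, $x \in A \sus \tilde{A}$ is an anchor of $\tilde{A}$, so $x$ sits inside some block $B_i^A$ of the block structure of $\tilde{A}$ at density $s+1$, with starting anchor $b_i \in A$. The prefix bound
\[ |[b_i,y]| + 1 \leq (s+1)\,|[b_i,y] \sect A|, \]
which is part of the definition of block structure, applied at $y = x$, gives a lower bound on the anchor count of $\tilde{A}$ between $b_i$ and $x$, while the corresponding prefix bound for the $\tilde{B}$-block immediately preceding the gap $G_j$ gives the analogous bound for $\tilde{B}$ at density $s'+1$.

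The quantitative input then comes from the hypothesis $(b+1)(s'+1) \geq (a+1)(s+1)$, together with $n+1 \geq (b+1)(s'+1)$, which controls the total length of blocks in both structures. Summing the blockwise inequalities $|B_i^A| \leq (s+1)|A \sect B_i^A|$ and their analogues $|B_j^B| \leq (s'+1)|B \sect B_j^B|$ around the cycle, and comparing the two counts on the circular segment that contains $x$, should force the $\tilde{B}$-block structure to already include $x$ as an anchor, i.e.\ $x \in B$, contradicting the choice of $x$.

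The main obstacle is the combinatorial bookkeeping of two simultaneous block structures on the same cyclic ground set, with different anchor sets and different densities. The predecessor results from \cite{Ke} and \cite{Lin} trace the location of a putative shared element through a single block structure and repeatedly invoke both the block bound and the prefix bound; the extra difficulty here is that $a$, $b$, $s$, $s'$ can all differ, so the inequality $(b+1)(s'+1) \geq (a+1)(s+1)$ must be used as a global counting constraint rather than a blockwise one. A possibly cleaner strategy, which I would attempt first, is to reduce to the equal-density and equal-cardinality case handled by the previous proposition by padding the smaller anchor set into a larger one on a suitably enlarged cyclic ground set, so that only the disjointness statement for a fixed density and cardinality is ever invoked directly.
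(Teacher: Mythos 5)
The paper does not prove this proposition itself: it is quoted directly as a specialization of Proposition~3.5 in the cited preprint of Ge, Lin, and Shen, so there is no in-paper argument to compare against. Evaluated on its own terms, your proposal is a reasonable \emph{reduction} but not a proof. The first paragraph is correct and genuinely useful: if $C$ lies in both intervals, then $B\subseteq C\subseteq f_{s+1}(\tilde{A})\cap[n]$, so the whole statement reduces to showing $A\subseteq B$. That framing is exactly what one wants.

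The gap is everything after that. You reduce $A\subseteq B$ to: ``$x\in A\setminus B$ puts $x$ in a gap of the $\tilde{B}$-structure while also making $x$ an anchor of the $\tilde{A}$-structure, and the prefix/block inequalities plus $(b+1)(s'+1)\geq(a+1)(s+1)$ should force a contradiction.'' But this is an assertion that a contradiction \emph{should} arise, not a derivation of one; you never combine the two prefix bounds, never locate the comparison on a common segment, and never use the inequality in a concrete estimate. Moreover, the two block structures do not even live on the same cyclic ground set: $\tilde{A}$ sits inside $[ns+n+s]$ and $\tilde{B}$ inside $[ns'+n+s']$, with different paddings $\{n+1,\ldots,2n-a\}$ and $\{n+1,\ldots,2n-b\}$, so one cannot naively ``sum around the cycle'' and compare counts — one must first explain how the two circular geometries interact after restricting to $[n]$. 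Finally, your fallback suggestion (pad the smaller anchor set into a larger one to reduce to the equal-density, equal-cardinality case) is also left entirely unexecuted and is not obviously sound, since padding $A$ into a set of cardinality $b$ on a larger circle changes $f_{s+1}(\tilde A)\cap[n]$ and hence the interval whose disjointness you need. So the proposal correctly identifies the target claim $A\subseteq B$ and the relevant hypotheses, but it does not supply the combinatorial argument that actually proves it; as written, it is a plan, not a proof.
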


We are now ready to prove our theorem.

\begin{Theorem}\label{SqfreeStDe}
Let $s$ be the largest integer such that $n+1 \geq (2s+1)(s+1)$.
Then the Stanley depth of any squarefree monomial ideal in $n$ variables is
greater or equal to $2s+1$. Explicitly this lower bound is
\[ 2 \left \lfloor \frac{\sqrt{2n+ 2.25} + 0.5}{2} \right \rfloor - 1. \]
\end{Theorem}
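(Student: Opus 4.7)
The plan is to reduce the theorem to an interval-partition problem and solve it by a greedy algorithm, with Proposition \ref{4disjoint} providing the disjointness.

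First, I would invoke the standard characterization of Stanley depth for squarefree monomial ideals (via \cite{He}): $\sdepth I$ is the maximum, over partitions of the poset
\[ P_I := \{\, C \sus [n] : \textstyle\prod_{i \in C} x_i \in I \,\} \]
into intervals $[A_i, B_i]$ in $2^{[n]}$, of $\min_i |B_i|$. Since $P_I$ is upward closed, any interval with bottom in $P_I$ lies entirely in $P_I$, so it suffices to produce an interval partition of $P_I$ with every top of cardinality at least $2s+1$.

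Define $\sigma : \{1,\ldots,n\} \to \NN$ by $\sigma(a) = \max(2s+1-a,\, s)$ for $1 \leq a \leq 2s$ and $\sigma(a) = 0$ for $a \geq 2s+1$. I claim $\sigma$ has the four properties: (i) $a + \sigma(a) \geq 2s+1$; (ii) $(a+1)(\sigma(a)+1) \leq n+1$; (iii) $\sigma$ is non-increasing; (iv) $(a+1)(\sigma(a)+1)$ is non-decreasing on $\{1,\ldots,2s\}$. Property (ii) is precisely where the hypothesis $n+1 \geq (2s+1)(s+1)$ is used, the maximum product on $\{1,\ldots,2s\}$ being $(2s+1)(s+1)$, attained at $a=2s$. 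For (iv), on $\{1,\ldots,s+1\}$ the product equals the concave parabola $(a+1)(2s+2-a)$, non-decreasing at integer points up to its peak $(s+1)(s+2)$ at $a=s+1$, while on $\{s+1,\ldots,2s\}$ it equals the linearly increasing $(a+1)(s+1)$, and the two pieces agree at $a=s+1$.

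I would then build the partition greedily: enumerate $A \in P_I$ in order of increasing $|A|$, breaking ties arbitrarily, and for each uncovered $A$ declare
\[ I_A := [A,\; f_{\sigma(|A|)+1}(\tilde A) \sect [n]], \]
whose top has cardinality $|A| + \sigma(|A|) \geq 2s+1$ by (i) and the size formula stated just before Proposition \ref{4disjoint}. For $|A| \geq 2s+1$ this is the singleton $\{A\}$, and coverage of $P_I$ is built into the algorithm. For disjointness of two declared intervals $I_{A_1}, I_{A_2}$ with $|A_1| \leq |A_2|$, note first that $A_2 \notin I_{A_1}$ since $A_2$ was uncovered at its processing time. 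If $I_{A_2}$ is a singleton this alone suffices; otherwise $|A_1|, |A_2| \in \{1,\ldots,2s\}$ and Proposition \ref{4disjoint} applies with $s \leftarrow \sigma(|A_1|)$ and $s' \leftarrow \sigma(|A_2|)$: (iii) gives $s' \leq s$, (ii) together with (iv) give $n+1 \geq (|A_2|+1)(s'+1) \geq (|A_1|+1)(s+1)$, and $A_2 \notin I_{A_1}$ is the remaining hypothesis, so the proposition yields $I_{A_1} \sect I_{A_2} = \emptyset$.

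The main subtlety is the choice of $\sigma$ in the intermediate range $s+1 \leq a \leq 2s$: the naive choice $\sigma(a) = 2s+1-a$ would saturate (i) but violate (iv), because the parabola $(a+1)(2s+2-a)$ turns over after $a=s+1$. Bumping $\sigma(a)$ up to the floor value $s$ in that range restores monotonicity of the product while keeping it bounded by $(2s+1)(s+1) \leq n+1$, which is exactly the content of the hypothesis; this tight interplay between the two halves of the definition of $\sigma$ is where the argument works or fails.
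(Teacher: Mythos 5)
Your proof is correct and follows essentially the same route as the paper: the same characterization of Stanley depth via interval partitions of $P_I$ from \cite{He}, the same sequence $\sigma$ (your $\max(2s+1-a,\,s)$ agrees exactly with the paper's piecewise definition on $\{1,\ldots,2s\}$, and your $\sigma(a)=0$ for $a\geq 2s+1$ plays the role of the paper's trivial intervals $[B,B]$), the same greedy/iterative construction of the partition by increasing cardinality, and the same invocation of Proposition \ref{4disjoint} for disjointness. Your writeup is somewhat more explicit than the paper's in isolating the four required properties (i)--(iv) of $\sigma$ and verifying (iv) by the parabola-plus-linear argument, but there is no substantive difference in method.
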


\begin{Remark} For $n = 5$, the above bound says that the Stanley depth is greater than
or equal to $3$ which is best possible, since this is the Stanley depth of the
maximal ideal $(x_1,\ldots, x_5)$.
\end{Remark}

\begin{Remark} In \cite{Ci2} it is shown that the Stanley depth of the
squarefree Veronese ideal generated by squarefree monomials of degree $d$ has
Stanley depth less or equal to
$\lfloor \frac{n+1}{d+1} \rfloor + d - 1$. With $d+1$ approximately $\sqrt{n+1}$
this is approximately $2\sqrt{n+1} - 2$. Thus our lower bound is right up to a
constant.
\end{Remark}

\begin{proof}[Proof of Theorem \ref{SqfreeStDe}]
The squarefree ideal $I$ corresponds
to an order filter $P_I$ of the poset consisting of subsets
of $[n]$, by taking supports of the squarefree monomials in $I$.
By J.Herzog et.al \cite{He}, if we have a partition $\Pscr$ of $P_I$,
the Stanley depth of $I$ is greater or equal to the minimum cardinality
of any subset $B$ of $[n]$ such that $[A,B]$ is an interval in $\Pscr$.
We shall therefore construct a suitable partition to give the lower bound.

Given positive integers $n,r$ and $s$ with $r >s$ and
\[(n+1) \geq (r+1)(s+1).\]
Define the sequence $\sigma : [r] \rightarrow \ZZ$ by
\[ \sigma(i) = \begin{cases} s, & i \geq s+1 \\
                               s+k, & i = s+1 -k \leq s+1
                 \end{cases}.
\]

Note that if $u \leq v$ then
$(u+1)v \geq u (v+1)$.
Therefore the expression $(i+1)(\sigma(i) + 1)$ weakly decreases as $i$ decreases,
enabling us to apply Proposition \ref{4disjoint}.

\medskip
 We now construct  a partitition ${\Pscr}$ of $P_I$ as follows.
Let $\Pscr_1$ consist of all intervals
\[ [\{i\}, f_{\sigma(1) + 1}(\widetilde{\{i\}}) \sect [n]] \]
where $\{i\}$ is in $P_I$. Let $\Pscr_2$ consist of all intervals
\[ [\{i_1,i_2\}, f_{\sigma(2) + 1}(\widetilde{\{i_1, i_2\}}) \sect [n]] \]
where $\{i_1, i_2\}$ is in $P_I$ but not in any of the intervals in $\Pscr_1$.
By Proposition \ref{4disjoint}, $\Pscr_2$ will consist of disjoint intervals. Having
constructed $\Pscr_{a-1}$ we construct $\Pscr_a$ by adding to
$\Pscr_{a-1}$ all intervals $[A, f_{\sigma(a) + 1}(\tilde{A}) \sect [n]]$
where $A$ is an $a$-set in $P_I$ not in any
interval of $\Pscr_{a-1}$. Having reached $\Pscr_r$ we obtain
$\Pscr$ by adding all trivial intervals $[B,B]$ where $B$ is in $P_I$ but not
in any of the intervals in $\Pscr_r$. Note that each such $B$ has cardinality
greater or equal to $r+1$.

The Stanley depth of the partition $\Pscr$ will be the smallest of the numbers
$r+1$ and $i+\sigma(i)$ for $i = 1, \ldots, r$, which is the minimum of $r+1$ and $2s+1$.
Choose $r = 2s$ and $s$ to be the largest number such that $n+1 \geq (2s+1)(s+1)$.
Then the Stanley depth of $\Pscr$ is $2s+1$.
We get
\begin{eqnarray*}
(2s+1)(s+1) & \leq & n+1 \\
4s^2 + 6s + 2 & \leq & 2n+2 \\
(2s+1.5)^2 & \leq & {2n + 2.25} \\
2s+2 & \leq & \sqrt{2n+2.25} + 0.5
\end{eqnarray*}
which gives
\begin{eqnarray*} s &\leq &\left \lfloor \frac{\sqrt{2n+2.25}+0.5}{2} \right\rfloor - 1 \\
2s+1 & \leq & 2 \left\lfloor \frac{\sqrt{2n+2.25}+0.5}{2} \right\rfloor - 1.
\end{eqnarray*}
The largest value of $2s+1$ is then given by the right side above.
\end{proof}

{}

\end{document}